\documentclass[11pt,reqno]{amsart}
\usepackage[T1]{fontenc}
\usepackage{amsthm}
\usepackage{amsmath}
\usepackage{stmaryrd}
\usepackage{cite}
\usepackage{amssymb}
\usepackage{xcolor}
\usepackage{enumitem}
\usepackage{mathrsfs}
\usepackage[margin=1.2in]{geometry}
\newcommand{\vertiii}[1]{{\left\vert\kern-0.25ex\left\vert\kern-0.25ex\left\vert #1 
    \right\vert\kern-0.25ex\right\vert\kern-0.25ex\right\vert}}

\usepackage{hyperref}
\hypersetup{
    colorlinks,
	linkcolor={black!30!blue},
	citecolor={red},
	urlcolor={black!30!green}
}

\usepackage{todonotes}

\newcommand{\bbC}{{\mathbb{C}}}
\newcommand{\bbD}{{\mathbb{D}}}
\newcommand{\bbN}{{\mathbb{N}}}
\newcommand{\bbQ}{{\mathbb{Q}}}
\newcommand{\bbR}{{\mathbb{R}}}

\newcommand{\bbT}{{\mathbb{T}}}
\newcommand{\bbZ}{{\mathbb{Z}}}

\newcommand{\calE}{{\mathcal{E}}}

\newcommand{\calH}{{\mathcal{H}}}
\newcommand{\calL}{{\mathcal{L}}}
\newcommand{\calM}{{\mathcal{M}}}
\newcommand{\calP}{{\mathcal{P}}}

\newcommand{\rmd}{{\mathrm{d}}}

\DeclareMathOperator{\spn}{span}

\DeclareMathOperator{\real}{Re}
\renewcommand{\Re}{\real}

\newcommand{\iop}{{\mathbf{i}}}

\newcommand{\UAMO}{{\mathrm{UAMO}}}
\newcommand{\Szego}{\operatorname{Sz}}
\DeclareMathOperator{\monodromy}{mon}
\DeclareMathOperator{\discrim}{discr}
\DeclareMathOperator{\trace}{trace}
\newcommand{\ac}{\mathrm{ac}}
\newcommand{\complexCircle}{{\partial \bbD}}

\newcommand{\idty}{{\mathbf{1}}}

\newcommand{\doubleBracket}[1]{\llbracket #1 \rrbracket}

\newtheorem{theorem}{Theorem}[section]
\newtheorem{prop}[theorem]{Proposition}
\newtheorem{lemma}[theorem]{Lemma}

\theoremstyle{definition}

\numberwithin{equation}{section}

\title[(Almost) Ballistic  Motion for Unitary Operators]{Absence of Ballistic Motion and
\\ Presence of Almost-Ballistic Motion\\
for Unitary Operators with Pure Point Spectrum}

\author[C.\ Cedzich]{Christopher Cedzich}
\address{[C.\ Cedzich] Fakult\"at f\"ur Mathematik und Informatik, FernUniversit\"at in Hagen, Universit\"atsstr. 1, 58097 Hagen, Germany}
\email{\href{mailto:christopher.cedzich@fernuni-hagen.de}{christopher.cedzich@fernuni-hagen.de}}

\author[J.\ Fillman]{Jake Fillman}
\address{[J.\ Fillman] Department of Mathematics, Texas A\&M University, College Station, TX 77843,
USA}
\email{\href{mailto:fillman@tamu.edu}{fillman@tamu.edu}}

\author[L.\ Vel\'azquez]{Luis Vel\'azquez}
\address{[L.\ Vel\'azquez] Departamento de Matem\'atica Aplicada \& IUMA, Universidad de Zaragoza, Mar\'ia de Luna 3, 50018 Zaragoza, Spain.}
\email{\href{mailto:velazque@unizar.es}{velazque@unizar.es}}

\date{}

\allowdisplaybreaks

\begin{document}

\begin{abstract}
     We adapt two results of Simon and collaborators to the setting of discrete-time unitary dynamics.
     We show that pure point spectrum precludes ballistic motion, and exhibit a family of examples showing that this is sharp within the class of extended Cantero--Moral-Vel\'{a}zquez (CMV) matrices: that is, there exist extended CMV matrices exhibiting pure point spectrum together with quantum dynamics as close to ballistic motion as one desires.
\end{abstract}
\dedicatory{Dedicated to Barry Simon on the occasion of his 80th birthday.}

\maketitle

\section{Introduction}
\subsection{Overview}
This paper concerns the quantum dynamical characteristics of unitary operators on the Hilbert space $\ell^2(\bbZ)$ with finite-range interactions.
Such operators are called ``quantum walks'' in the literature and, after suitable regrouping of the Hilbert space into local cells \cite{shiftcoin}, they describe the motion of a single particle with an internal degree of freedom in discrete time \cite{ambainisOnedimensionalQuantumWalks2001, meyerQuantumCellularAutomata1996}.
The simplest operators within this class are the extended
Cantero--Moral--Vel\'{a}zquez (CMV) matrices \cite{canteroFivediagonalMatricesZeros2003}, which play an important role in the theory of orthogonal polynomials on the unit circle (OPUC); see \cite{Simon2005OPUC1, Simon2005OPUC2} and references therein.

From the point of view of quantum simulation \cite{feynmanSimulatingPhysicsComputers1982}, one is naturally interested in the dynamical properties of discrete time evolutions, that is, the behavior of $U^t \psi$ as $t \to \infty$ for a given initial state $\psi$ and a unitary operator $U$ that satisfies a suitable locality condition.
Known results in this direction include ballistic transport (that is, linear scaling of the position observable) in the periodic setting \cite{grimmett_weak_2004,AVWW2011JMP,DamFilOng2016JMPA}, diffusive transport in the presence of decoherent noise \cite{AVWW2011JMP}, anomalous transport in certain quasicrystals \cite{DamFilOng2016JMPA}, and dynamical localization in the setting of i.i.d.\ randomness for various model classes \cite{ASW2011JMP,boumaza_dynamical_2025,hamza_dynamical_2009,joye_dynamical_2010,asch_dynamical_2012}. 
A particularly surprising type of transport is the hierarchical motion found for quantum walks in well-approximable irrational electric fields \cite{CRWAGW2013PRL}: better and better revivals of the initial state alternate with farther and farther excursions.
Moreover, one can examine conditions under which the pair $(U,\psi)$ is ``recurrent'', that is, whether $U^t\psi$ returns its initial state $\psi$ \cite{bourgain_quantum_2014,grunbaum_recurrence_2013,grunbaum_quantum_2020}.

Often, the time-dependent problem is first studied through the lens of the (time-independent) spectral problem for the unitary operator implementing the dynamics. 
More specifically, the RAGE theorem\footnote{The name here was coined by Reed and Simon \cite{reedMethodsModernMathematical1979} to celebrate important contributions of Ruelle \cite{ruelle1969remark}, Amrein and Georgescu \cite{amrein1973characterization} and Enss \cite{enss1978asymptotic}} supplies concrete (albeit coarse) connections between the spectral decomposition of an operator and the associated (unitary) dynamics: broadly speaking, the part of the wavepacket corresponding to the pure point spectrum is localized and the part corresponding to the continuous spectrum is delocalized.
More precisely, if the spectral measure of the initial state $\psi$ is pure point, then for any $\varepsilon>0$, there is a ($\varepsilon$-dependent) region of space that the wavepacket does not escape with probability $1-\varepsilon$.
On the other hand, if the spectral measure of the initial state is continuous, then the associated wave packet will leave any fixed compact set (in a time-averaged sense) as $t \to \infty$.
A more precise formulation in the discrete-time setting can be found, e.g., in \cite{enssBoundStatesPropagating1983, FillmanOng2017JFA}.
The first main result is a refinement of this discussion for banded unitary operators in $\ell^2(\bbZ)$: point spectrum precludes ballistic transport (the fastest possible transport in the setting under consideration); the analogous result for Schr\"odinger operators is due to Simon and can be found in \cite{simonAbsenceBallisticMotion1990}.
Further results and discussion about ballistic motion can be found in the survey \cite{DMYballistic}.

Let us be more precise. We say that a unitary operator $U:\ell^2(\bbZ) \to \ell^2(\bbZ)$ is \emph{banded} if for some $R>0$ one has
\begin{equation}
    |\langle \delta_n, U \delta_m \rangle | = 0 \text{ whenever } |n-m| >R.
\end{equation}
Also, we define the position operator $X:D(X) \to \ell^2(\bbZ)$ by
\begin{align}
    [X\psi ](n) 
    & = n \psi(n),\qquad %\\[2mm]
    D(X) 
    %& 
    = \left\{ \psi \in \bbC^\bbZ : \sum_{n \in \bbZ} |n\psi(n)|^2 < \infty \right\}.
\end{align}

\begin{theorem}\label{t.ppnoballistic}
Suppose $U$ is a banded unitary operator that has only pure point spectrum and for which all eigenvectors belong to $D(X)$.\footnote{In particular, this assumption is met by any banded $U$ having exponentially decaying eigenvectors.} 
Then, for every $\psi \in D(X)$, we have
\begin{equation} \label{eq:operatorNoBallistic}
\lim_{t\to\infty}\frac1{t^2}\|XU^t\psi\|^2=0.    
\end{equation}
\end{theorem}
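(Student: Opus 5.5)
Following Simon's argument for Schr\"odinger operators, we work with the Heisenberg-picture position operator $X(t) = U^{-t} X U^t$ and the discrete ``velocity'' $D = U^{-1} X U - X$. Bandedness gives $|\langle \delta_n, D\delta_m\rangle| = |\langle \delta_n, (U^*XU - X)\delta_m\rangle|$, and a direct computation shows $U^*XU - X = U^*[X,U]$. The commutator $[X,U]$ has matrix entries $(n-m)\langle\delta_n,U\delta_m\rangle$. These are supported on $|n-m|\le R$ and bounded by $R$ in modulus, so $[X,U]$ extends to a bounded operator and $D$ is bounded with $\|D\|\le C R$. In particular $U$ preserves $D(X)$, and for $\psi\in D(X)$ we have the telescoping identity
\[
X U^t\psi = U^t\Bigl(X\psi + \sum_{s=0}^{t-1} U^{-s} D U^{s}\psi\Bigr).
\]
Hence $\frac1t XU^t\psi = U^t\bigl(\frac1t X\psi + A_t\psi\bigr)$ with $A_t := \frac1t\sum_{s=0}^{t-1}U^{-s}DU^s$. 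Since $\|X\psi\|/t\to0$, the claim reduces to showing $\|A_t\psi\|\to0$ for every $\psi\in\ell^2$. Because $\|A_t\|\le\|D\|$ uniformly in $t$, a density argument then reduces this further to $\psi$ in a dense set.

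\textbf{Key step (ergodic average).} We take $\psi=\varphi$, an eigenvector with $U\varphi=e^{i\theta}\varphi$; finite linear combinations of eigenvectors are dense by the pure point hypothesis. Then
\[
A_t\varphi = \frac1t\sum_{s=0}^{t-1}e^{i s\theta}U^{-s}D\varphi .
\]
This is a von Neumann ergodic average for the unitary $e^{-i\theta}U^{-1}$ applied to $D\varphi$. By the mean ergodic theorem it converges to $P_\theta D\varphi$, where $P_\theta$ is the orthogonal projection onto $\ker(U-e^{i\theta})$. So it suffices to prove $P_\theta D\varphi=0$, i.e.\ $\langle \eta, D\varphi\rangle=0$ for all eigenvectors $\eta$ with the same eigenvalue $e^{i\theta}$.

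\textbf{Main obstacle: the eigenvector cancellation.} This is where the hypothesis that eigenvectors lie in $D(X)$ enters. For $\eta,\varphi\in\ker(U-e^{i\theta})\subset D(X)$, we have $D\varphi = U^*XU\varphi - X\varphi$. The first term equals $U^*X e^{i\theta}\varphi = e^{i\theta}U^*X\varphi$, which is legitimate since $\varphi\in D(X)$. Therefore
\[
\langle\eta,D\varphi\rangle = e^{i\theta}\langle U\eta, X\varphi\rangle - \langle\eta,X\varphi\rangle = e^{i\theta}e^{-i\theta}\langle\eta,X\varphi\rangle-\langle\eta,X\varphi\rangle=0.
\]
The care needed is in the domain bookkeeping. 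We must justify $U^*XU\varphi = U^*(XU\varphi)$, which holds because $U$ maps $D(X)$ into itself by the bounded-commutator estimate above. We must also justify moving $U^*$ across the inner product; this is fine because $U^*$ is bounded and $X\varphi\in\ell^2$.

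Combining the three steps: $A_t\psi\to0$ for all $\psi$ in the span of the eigenvectors, hence for all $\psi\in\ell^2$ by uniform boundedness of $A_t$. Consequently $\|XU^t\psi\|/t\le \|X\psi\|/t+\|A_t\psi\|\to0$ for every $\psi\in D(X)$, which is \eqref{eq:operatorNoBallistic}.
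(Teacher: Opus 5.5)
Your proof is correct and is essentially the paper's argument: the same telescoping of $X(t)-X$ through the bounded commutator $[X,U]$, the same reduction to eigenvectors, and the same cancellation $\langle \eta,[X,U]\varphi\rangle=0$ within an eigenspace using $\varphi\in D(X)$. The differences are that you invoke the von Neumann mean ergodic theorem where the paper expands the double sum in an eigenbasis and uses dominated convergence, and you spell out the density step via $\|A_t\|\le\|D\|$, which the paper leaves implicit. One small slip: since $e^{is\theta}U^{-s}=(e^{i\theta}U^{-1})^s$, the relevant unitary is $e^{i\theta}U^{-1}$, not $e^{-i\theta}U^{-1}$; its fixed space is indeed $\ker(U-e^{i\theta})$, as you use.
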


Let us make a few comments about Theorem~\ref{t.ppnoballistic}. First, this is not a trivial consequence of the general statement from the RAGE theorem to the effect that the wave function remains confined only with probability $1-\varepsilon$.  That is, it is nevertheless possible that small portions of the wave packet do escape to infinity; in fact, there exist examples (such as the random dimer model) exhibiting a coexistence of pure point spectrum and almost-ballistic transport \cite{DamFilOng2016JMPA, deBievreGerminet2000JSP, JSS2003CMP}. 
Second, there is an additional assumption that is not present compared to the case of Schr\"odinger operators: namely, we explicitly need to assume that the eigenvectors belong to $D(X)$.
The reason for the assumption is relatively plain from the proof. When $H$ is a discrete Schr\"odinger operator,\footnote{The argument described here also generalizes to Jacobi matrices with suitable modifications.} the momentum operator $P = \iop [X,H]:=\iop(XH-HX)$ amounts to $\iop (S-S^{*})$ where $S$ is the shift $\delta_n \mapsto \delta_{n+1}$ on $\ell^2(\bbZ)$. Then, a simple computation shows
\begin{align*}
    \langle \psi, P \psi \rangle = \iop \langle \psi, S \psi - S^{*} \psi \rangle = 0 
\end{align*}
for any real-valued $\psi$. Since the eigenvectors of $H$ can be chosen to be real-valued, this shows that $\langle \psi, P \psi \rangle = 0$ for all eigenvectors $\psi$ of $H$ directly.
However, in the present situation, the ``momentum'' operator $P=[X,U]$ is in general more complicated, and it is easy to see that, in general, one does not have $\langle \psi, P\psi \rangle = 0$ for all real-valued $\psi$ (and at any rate, eigenfunctions of a general banded unitary are not, in general, amenable to being realified in the same way as eigenfunctions of a Schr\"odinger operator).
We instead need to use a different (formal) computation to directly show that $\langle \psi, P\psi \rangle = 0$ for eigenvectors $\psi$ of $U$, but the formal computation requires $\psi \in D(X)$.
It would be interesting to clarify whether this assumption is needed or if a different argument can get rid of it, perhaps in suitable special cases.
Finally, the result still holds after regrouping the Hilbert space $\ell^2(\bbZ)$ into (finite-dimensional) local cells as is done for quantum walks \cite{shiftcoin}. This requires a modification of the definition of the position operator, but it is straightforward to see that the proof still applies after the necessary changes.

\subsection{A Pathological Example Among ECMV Matrices}
\label{subsec:ECMVbasics}
Among the banded unitary matrices, one of the most important classes of examples is that of extended Cantero--Moral--Vel\'azquez (ECMV) matrices.
Given coefficients $\alpha_n \in \bbD := \{z \in \bbC : |z|  < 1\}$ for $n \in \bbZ$, the associated ECMV matrix is defined by 
$\calE=\calL\calM$, where 
\begin{equation} \label{eq:LMdecomp}
\calL=\bigoplus_{n\in\bbZ}\Theta(\alpha_{2n}), \qquad 
\calM=\bigoplus_{n\in\bbZ}\Theta(\alpha_{2n+1})
\end{equation}
are specified by
\begin{equation} \label{eq:theta_mat}	
\Theta(\alpha)
= \begin{bmatrix}\overline{\alpha} & \rho\\
	{\rho}&-\alpha\end{bmatrix}, 
    \quad \rho := \sqrt{1-|\alpha|^2},
\end{equation}
with the convention that $\Theta(\alpha_j)$ acts on the subspace $\ell^2(\{j,j+1\})$.
On one hand, this setting is natural from the point of view of orthogonal polynomials on the unit circle and harmonic analysis on the unit disk.
On the other hand  it is quite natural to consider complexified $\rho$'s in some scenarios, which directly leads to operators that are (equivalent to) split-step quantum walks on the integer lattice.
On account of the CGMV connection and its antecedents \cite{CFLOZ2024IMRN, CGMV2010CPAM, CGMV2012QIP}\footnote{Note that the Arnold principle applies here as recited in \cite{simonCMVMatricesFive2007}: this connection was noted at least a decade earlier in \cite{AlainUnitaryBandMats} (without explicit reference to ``quantum walks'') and then rediscovered several times.} the two settings are equivalent up to a simple gauge transformation.
We emphasize that the freedom to adjust the phases of the $\rho$'s is not merely an obsolete artifact but is often convenient if not necessary to identify hidden symmetries and facilitate proofs, see for example \cite{CFLOZ2024IMRN, CF2024JST}.

From a suitable member of this class, we will see that Theorem \ref{t.ppnoballistic} cannot be improved in the sense that pure point spectrum can coexist with almost-ballistic motion.
    More precisely, we have the following:
\begin{theorem} \label{t:quasiballistic}
     For any increasing function $f:(0,\infty) \to (0,\infty)$ with $f(t) \to \infty$ as $t \to \infty$ there exists a GECMV matrix $\calE$ having pure point spectrum, exponentially decaying eigenvectors, and satisfying an almost-ballistic motion bound of the form
    \begin{equation}\label{eq:almostballistic}
        \limsup_{t\to\infty} \frac{f(t)}{t^2}\|X\calE^t\delta_0\|^2 = \infty.
    \end{equation}
\end{theorem}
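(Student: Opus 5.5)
The plan is to adapt the construction of del Rio--Jitomirskaya--Last--Simon (and Simon's original ``operators with singular continuous spectrum'' type constructions) to CMV matrices. We build $\calE$ as a limit of operators whose dynamics we control, using rank-one-type perturbations of the Verblunsky coefficients at widely separated sites. The starting point is the free ECMV matrix $\calE_0$ with all $\alpha_n=0$. More usefully, it is a periodic (or $\alpha\equiv 0$, $\rho\equiv 1$) matrix, which acts as a pure shift on even/odd sublattices. Hence $\|X\calE_0^t\delta_0\|\sim t$, which is exactly ballistic. We then construct a sequence of coefficient sequences $\alpha^{(k)}$ and times $t_k\to\infty$ inductively. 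At stage $k$, the operator $\calE_k$ is chosen so that $\calE_k$ has pure point spectrum with exponentially decaying eigenfunctions. In addition, $\|X\calE_k^{t_k}\delta_0\|^2\ge t_k^2/ f(t_k)^{1/2}$, say. The final $\calE$ is the limit, and the coefficients are frozen on larger and larger windows $[-L_k,L_k]$ with $L_k\gg t_k$.

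The key steps, in order, are as follows. (i) Choose a localizing background, e.g.\ i.i.d.\ random or an Anderson-type decaying-potential model of CMV type (a ``generalized'' ECMV with phases in $\rho$ allowed). For this background, Anderson localization with exponentially decaying eigenvectors is known from the cited random CMV literature. (ii) Insert ballistic ``channels'': on the window $[-L_k,L_k]$ set $\alpha_n=0$ on a long block $[-N_k,N_k]$ with $N_k\ge t_k$. By finite propagation speed (bandedness), $\calE^{t}\delta_0$ for $t\le t_k$ depends only on coefficients in $[-2t_k,2t_k]$. So if $\alpha_n=0$ there, then $\calE^{t_k}\delta_0=\calE_0^{t_k}\delta_0$, giving $\|X\calE^{t_k}\delta_0\|\approx t_k$. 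To avoid destroying localization, the free block must instead be made only ``almost free''. That is, on $[-N_k,N_k]$ use $\alpha_n=\lambda_k\omega_n$ with small random $\lambda_k$ or small coupling. Choose $t_k$ so that $\calE^{t}\delta_0$ is within $\varepsilon_k$ of the free evolution up to time $t_k$ (perturbation bound $\|\calE^t-\calE_0^t\|\le t\sup|\alpha_n|$ locally), while the tiny disorder still localizes at scale much larger than $t_k$. (iii) Use $f$ to pick the parameters. Having fixed everything up to stage $k-1$, pick $t_k$ so large that $f(t_k)\ge k^2$. This yields $\frac{f(t_k)}{t_k^2}\|X\calE^{t_k}\delta_0\|^2\ge c\,k^2\to\infty$. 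Note that any positive fraction of ballistic mass suffices; only $f(t_k)\to\infty$ is needed.

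The main obstacle is step (ii)/(iii) in combination with pure point spectrum for the limit operator. Localization is not stable under modifying infinitely many coefficients, and the limit of localized operators need not be localized. I would handle this as in Simon's approach: use a random family $\alpha_n=\lambda_{n}\omega_n$ with $\lambda_n$ decaying slowly along a sparse sequence of scales. Then prove localization for almost every $\omega$ via spectral averaging plus a Kotani/Simon--Wolff criterion for CMV matrices. This requires showing $\sum_n|\langle\delta_n,(\calE-z)^{-1}\delta_0\rangle|^2<\infty$ for a.e.\ boundary point, using positive Lyapunov exponents of the Szeg\H{o} transfer matrices from Furstenberg-type growth. At the same time, each block has coupling small enough to mimic free motion up to $t_k$. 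Exponential decay of eigenvectors then follows from the Lyapunov exponent bound (Ruelle--Oseledets) on each eigenvalue. Balancing these two requirements is the crux: the coupling must be small enough for ballistic transport over time $t_k$, yet the Lyapunov exponent must remain positive so that localization holds.
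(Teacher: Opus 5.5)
\textbf{There is a genuine gap, in the localization half.} Your dynamical scheme is repairable, but the near-free corridors it relies on work against localization.

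First, the tools you name (Furstenberg's theorem, Ruelle--Oseledets, positivity of the Lyapunov exponent) need stationary or ergodic coefficients. Your $\alpha_n=\lambda_n\omega_n$ is inhomogeneous. If $\lambda_n\to0$, or if the corridors are much longer than what precedes them, then $\liminf_n \frac1n\log\|\Szego_{[1,n]}(z)\|=0$, so at best subexponential decay can be expected.

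More fundamentally, consider a corridor $[L_k,2t_{k+1}]$ with $|\alpha_n|\le\lambda_{k+1}\ll 1/t_{k+1}$. Each $\Szego(\alpha_n,z)$ is within $O(\lambda_{k+1})$ of $\mathrm{diag}(z,1)$, which is unitary for $|z|=1$. Hence the solution vector $\vec u(n)$ of any generalized eigenvector has essentially constant norm across the corridor. On the other hand, $\|\vec u(L_k)\|\ge e^{-AL_k}\|\vec u(0)\|$, where $e^{A}$ bounds $\|\Szego(\alpha_n,z)^{-1}\|$ and depends only on $\sup_n|\alpha_n|<1$. Some eigenvector must have $u(0)\neq0$ if $\delta_0$ lies in the point spectral subspace. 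Such an eigenvector can decay exponentially only if $t_{k+1}=O(L_k)$. It is not even in $\ell^2$ once $t_{k+1}e^{-2AL_k}$ is unbounded; this is the sparse-barrier mechanism that produces singular continuous spectrum.

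Your scheme cannot keep $t_{k+1}$ comparable to $L_k$. You choose $t_{k+1}$ after $[-L_k,L_k]$ is frozen. The mass reaching the new corridor must first cross the localizing stretch inside $[-L_k,L_k]$ that is supposed to create the point spectrum. That mass is typically exponentially small in the length of this stretch measured in localization lengths. For slowly growing $f$, making $f(t_{k+1})$ beat its inverse forces $t_{k+1}$ to be astronomically larger than $L_k$: for $f=\log$ and a fixed-coupling stretch of length $\asymp L_k$, you need $t_{k+1}\ge \exp(e^{cL_k})$. So for slowly growing $f$ your limit operator has no exponentially decaying eigenvectors, and typically no eigenvalues at all.

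A secondary gap: the identity $\calE^{t_k}\delta_0=\calE_0^{t_k}\delta_0$ ignores the nonzero frozen coefficients near the origin. You would need a quantitative escape estimate for ``frozen core plus (almost) free exterior'' that is uniform over the random core. This is required because $t_k$ must be deterministic, while localization is only claimed for a.e.\ $\omega$.

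The paper avoids both problems by letting \emph{approximate periodicity}, rather than near-freeness, carry the transport. It perturbs the supercritical unitary almost-Mathieu operator at sites $0,1$ and takes a Liouville frequency $\Phi=\lim\Phi_m$ with rational $\Phi_m$.
\begin{itemize}
\item For each periodic approximant, uniformly bounded Szeg\H{o} products on a band give $\delta_0=\varphi+\psi$ with $\|\psi\|\ge C_1$ and $\psi$ absolutely continuous with density at most $C_2^2$, uniformly in $\theta,\beta_0,\beta_1$.
\item Parseval plus the projection lemma then push a fixed fraction of the mass beyond $T_m/f(T_m)^{1/5}$.
\item The bound $\|\calE_{\Phi_1,\theta,\beta_0,\beta_1}^t\delta_0-\calE_{\Phi_2,\theta,\beta_0,\beta_1}^t\delta_0\|\le 2\pi\lambda_2|\Phi_1-\Phi_2|t^2$ transfers this to the limit frequency, still uniformly in $\theta,\beta_0,\beta_1$.
\item Localization comes from the positive Lyapunov exponent of the \emph{ergodic} supercritical family, combined with spectral averaging over $\beta_0,\beta_1$. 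Because the dynamical bound is uniform in these parameters, one can pick a localizing realization.
\end{itemize}
No stretch of near-trivial transfer matrices is ever created, so nothing forces plateaus in the eigenvectors.
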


The examples for Theorem~\ref{t:quasiballistic} will be constructed from suitable finite-rank perturbations of the unitary almost-Mathieu operator \cite{CFO2023CMP}, in the spirit of the constructions in \cite{dRJLS1996JAM, Last1996JFA}.
\bigskip

Let us conclude our introductory comments with a general philosophical comment about the choice of setting.
Often a unitary operator $U$ can be written as $U = \exp(-\iop H)$ for a suitable self-adjoint operator $H$, so one may naturally wonder why one does not simply work with the generator $H$ and extant results for the unitary group $\exp(-\iop tH)$, which could then simply be specialized to $t \in \bbZ$. 
However, there are (at least) three problems with this perspective. 
First, not every banded unitary enjoys a sufficiently local self-adjoint generator, which can be seen from the example of the shift $[U\psi](n) = \psi(n+1)$.
Second, and more importantly, the unitary operator $U$ under consideration will typically have some structure that one wishes to leverage, and such structure may be completely obscured or destroyed in the passage to a self-adjoint generator.
Finally, there has been recent success at implementing such discrete time quantum simulators using different platforms such as neutral atoms in optical lattices \cite{karskiQuantumWalkPosition2009} or time-multiplexed setups using coherent light \cite{schreiberPhotonsWalkingLine2010} or single photons \cite{xueObservationQuasiperiodicDynamics2014}, with even subtle dynamical and spectral features such as a metal-insulator transition being visible after a few time steps \cite{ElektricExp,ExpUAMO}.

\subsection*{Acknowledgements} J.F.\ was supported in part by National Science Foundation grant DMS-2513006 and Simons Foundation Grant MPS-TSM-00013720. L.V.\ was supported in part by the project PID2021-124472NBI00 funded by MICIU/AEI/10.13039/501100011033 and by `ERDF A way of making Europe'.

\section{Absence of Ballistic Motion} \label{sec:proofnoball}

Before we give the proof of Theorem~\ref{t.ppnoballistic}, let us make a few preliminary remarks. 
In general, if $B$ is an operator, we will write 
%\begin{equation} \label{eq:basSpr:heisEvoDef} 
    $B_U(t) = U^{-t}  B U^t$, 
    %\quad 
    $t \in \bbZ$,
%\end{equation} 
for the Heisenberg evolution of $B$ with respect to $U$. 
Whenever confusion is unlikely, we will suppress $U$ and simply write $B(t)$. 
For $B = X$, the position operator, we note the following elementary fact, which can be proved by direct computations:
\begin{prop}
   If $U$ is a banded unitary, then $D(X)$ is $U$-invariant and $X(t)$ is self-adjoint on $D(X(t)) = D(X)$ for all $t \in \bbZ$. 
\end{prop}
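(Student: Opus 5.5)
The plan is to prove the two claims by direct computation with the matrix entries $u_{nm} := \langle \delta_n, U\delta_m\rangle$, using only that $U$ and $U^{*}$ are both banded with the same range $R$ and are bounded.

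\emph{Step 1: $U$-invariance of $D(X)$.}
For $\psi \in D(X)$ we have $(U\psi)(n) = \sum_{|m-n|\le R} u_{nm}\psi(m)$. We rewrite $n = m + (n-m)$ with $|n-m| \le R$, which gives
\[
n (U\psi)(n) = (U X\psi)(n) + \sum_{|k|\le R} k\, u_{n,n-k}\psi(n-k).
\]
The first term lies in $\ell^2$ because $U$ is bounded and $X\psi \in \ell^2$. The second term is a finite sum of at most $2R+1$ terms. Each such term is a weighted shift of $\psi$ with coefficients bounded by $R$, since $|u_{nm}|\le 1$. Hence $U\psi \in D(X)$. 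More precisely, $XU - UX$ extends to a bounded operator $K := [X,U]$ with $\|K\| \le R(2R+1)$, and $XU\psi = UX\psi + K\psi$ on $D(X)$. The same argument applies to $U^{*}$, since it is also banded with range $R$. So $U^{\pm1}D(X) \subseteq D(X)$, and hence $U^{t} D(X) = D(X)$ for all $t\in\bbZ$.

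\emph{Step 2: self-adjointness of $X(t)$.}
By definition, $X(t) = U^{-t} X U^{t}$ with natural domain $\{\psi : U^t\psi \in D(X)\} = U^{-t}D(X)$. By Step 1 this domain equals $D(X)$. Since $U^t$ is unitary, $X(t)$ is unitarily equivalent to the self-adjoint multiplication operator $X$. Unitary conjugation preserves self-adjointness, with $D(X(t)) = U^{-t}D(X)$ and $X(t)^{*} = U^{-t}X^{*}U^{t} = X(t)$. Therefore $X(t)$ is self-adjoint on $D(X)$.

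The only point that needs care is the domain bookkeeping in Step 1: we must check that $\sum_n |n(U\psi)(n)|^2 < \infty$. The bounded-commutator decomposition above settles this directly. As a by-product it yields the identity $X(t) = X + \sum_{s=0}^{t-1} U^{-s}U^{*}KU^{s}$ on $D(X)$, which will presumably be used later in the proof of Theorem~\ref{t.ppnoballistic}.
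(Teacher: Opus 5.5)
Your proof is correct and follows the route the paper has in mind: the paper only says the proposition ``can be proved by direct computations,'' and your argument carries out that computation. Bandedness of $U$ and $U^{*}$ makes $[X,U]$ bounded, which gives $U^{\pm 1}D(X)\subseteq D(X)$, and self-adjointness of $X(t)$ on $D(X)$ then follows from unitary equivalence with $X$.
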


We then observe
\begin{align}
\nonumber
X(t+1) - X(t)
%& = U^{-t-1} X U^{t+1}
%- U^{-t} X U^t \\
%\nonumber
& = U^{-1} U^{-t} (XU - U X)U^{t} %\\
%& 
= U^{-1} P(t)
\end{align}
where $P$ is the discrete momentum operator
\begin{equation} \label{eq:basSpr:Pdef}
P :=[X,U] = X U - U X.
\end{equation}
Since $X$ is unbounded, $P$ is initially only defined on $D(X)$; however, by direct computations, it is readily seen that $P$ is bounded on $D(X)$ and hence (since $D(X)$ is dense in $\ell^2(\bbZ)$) extends uniquely to a bounded linear operator on all of $\ell^2(\bbZ)$.
Thus, we have
\begin{equation}\label{e.simonppnoballistic1b}
X(t) - X 
= U^{-1} \sum_{s=0}^{t-1} P(s)
\end{equation}
on $D(X)$.

\begin{lemma} \label{lem:eigConv}
    Let $\phi$ and $\psi$ be eigenvectors of a banded unitary $U$ satisfying the assumptions of Theorem \ref{t.ppnoballistic}.
    Then
    \begin{equation} \label{eq:SimonClaimGoal}
        %\lim_{t \to \infty} \frac{1}{t^2} \sum_{s,\widetilde{s}=0}^{t-1} \langle \phi , U^{-1} P(s) U^{-1} P(\widetilde s)  \psi \rangle
        %= 
        \lim_{t \to \infty} \frac{1}{t^2} \sum_{s,\widetilde{s}=0}^{t-1} \langle P(s)\phi , P(\widetilde s)  \psi \rangle
        = 0.
    \end{equation}
\end{lemma}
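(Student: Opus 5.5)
Let $U\phi = \lambda\phi$ and $U\psi=\mu\psi$ with $|\lambda|=|\mu|=1$. Since $P(s)=U^{-s}PU^{s}$, we have $P(s)\phi = \lambda^{s} U^{-s}P\phi$ and $P(\widetilde s)\psi = \mu^{\widetilde s}U^{-\widetilde s}P\psi$. Each summand therefore becomes
\begin{equation*}
\langle P(s)\phi, P(\widetilde s)\psi\rangle = \overline{\lambda}^{s}\mu^{\widetilde s}\,\langle P\phi, U^{s-\widetilde s} P\psi\rangle .
\end{equation*}
The plan is to expand $P\phi$ and $P\psi$ in an orthonormal eigenbasis $\{\varphi_j\}$ of $U$, with $U\varphi_j=\lambda_j\varphi_j$. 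This basis exists because $U$ has pure point spectrum. Write $a_j=\langle \varphi_j,P\phi\rangle$ and $b_j=\langle\varphi_j,P\psi\rangle$. These are square summable, since $P$ is bounded. The double sum then equals
\begin{equation*}
\sum_j \overline{a_j}\, b_j\, \Big(\sum_{s=0}^{t-1}(\overline{\lambda}\lambda_j)^{s}\Big)\Big(\sum_{\widetilde s=0}^{t-1}(\mu\overline{\lambda_j})^{\widetilde s}\Big).
\end{equation*}
Each geometric sum is bounded in modulus by $t$. Moreover, $\frac1t\sum_{s<t} z^s \to \idty_{\{z=1\}}$ for $|z|=1$. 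By dominated convergence, with dominating sequence $|a_j b_j|\in\ell^1$, the limit of $t^{-2}$ times the sum is
\begin{equation*}
\sum_{j:\ \lambda_j=\lambda=\mu}\overline{a_j}\,b_j .
\end{equation*}
So the limit vanishes trivially if $\lambda\ne\mu$. If $\lambda=\mu$, it equals $\langle Q P\phi, QP\psi\rangle$, where $Q$ is the spectral projection onto $\ker(U-\lambda)$. It therefore suffices to show $QP\phi=0$, i.e.\ $\langle \chi, P\phi\rangle =0$ for every eigenvector $\chi$ of $U$ with eigenvalue $\lambda$.

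This is the key step and where the hypothesis that eigenvectors lie in $D(X)$ enters. Both $\chi$ and $\phi$ lie in $D(X)$, so we may compute directly on $D(X)$, where $P=XU-UX$ is given by its defining formula:
\begin{equation*}
\langle\chi,P\phi\rangle = \langle \chi, XU\phi\rangle - \langle \chi, UX\phi\rangle = \lambda\langle X\chi,\phi\rangle - \langle U^{*}\chi, X\phi\rangle .
\end{equation*}
Here $X$ is self-adjoint and $\chi\in D(X)$. Since $U^*\chi=\overline{\lambda}\chi$, the second term is $\lambda\langle\chi,X\phi\rangle = \lambda\langle X\chi,\phi\rangle$, so the difference vanishes. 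This is exactly the ``formal computation'' alluded to in the introduction, and it is legitimate precisely because all eigenvectors lie in $D(X)$.

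The main point needing care is justifying the interchange of limit and the infinite eigenbasis sum; dominated convergence handles it. The only other point is that the Heisenberg identity $P(s)\phi=\lambda^sU^{-s}P\phi$ holds in the bounded-extension sense, which is immediate since $P(s)=U^{-s}PU^s$ with $P$ bounded.
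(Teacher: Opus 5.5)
Your proposal is correct and follows essentially the same route as the paper: expand $\langle P\phi, U^{s-\widetilde s}P\psi\rangle$ in the eigenbasis, use Cauchy--Schwarz and dominated convergence to reduce the limit to the resonant terms $\lambda_j=\lambda=\mu$, and then show those terms vanish by the direct $D(X)$ computation $\langle \chi, P\phi\rangle = 0$. (As a minor aside, your version of that computation only needs $\phi\in D(X)$, since both terms already equal $\lambda\langle\chi, X\phi\rangle$ before any appeal to self-adjointness of $X$.)
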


\begin{proof}
Let $\{\varphi_1,\ldots \}$ denote a complete orthonormal set of eigenvectors of $U$ with corresponding eigenvalues $\{z_1, \ldots \}$. Let $w$ and $z$ be the eigenvalues corresponding to $\phi$ and $\psi$, respectively.
%Utilizing \eqref{eq:Ps_conj} and e
Expanding in the basis $\{\varphi_j\}$ yields
\begin{align*}
\left\langle P(s) \phi , P(\widetilde s) \psi \right\rangle 
& = w^{-s}z^{\widetilde{s}} \left\langle U^{-s} P \phi ,U^{  -\widetilde{s}} P  \psi
\right\rangle \\
& = w^{-s } z^{ \widetilde{s}} 
\left\langle P \phi , \sum_{j = 1}^\infty \left\langle \varphi_j, U^{s- \widetilde{s}} P  \psi \right\rangle \varphi_j \right\rangle \\
& = w^{-s } z^{ \widetilde{s}} \sum_{j = 1}^\infty z_j^{s-\widetilde s}\left\langle \varphi_j, P  \psi \right\rangle\left\langle P \phi ,   \varphi_j \right\rangle.
\end{align*}

By the Cauchy--Schwarz inequality, one has
\begin{align*}
\sum_{j=1}^\infty |\langle \varphi_j,P\psi\rangle | |\langle P \phi, \varphi_j\rangle|
& \leq \left[\sum_{j=1}^\infty|\langle \varphi_j,P\psi\rangle |^2 \right]^{1/2} \left[ \sum_{j=1}^\infty|\langle P \phi, \varphi_j\rangle|^2\right]^{1/2} \\ & = \|P\psi\| \|P\phi\| \\ &< \infty.
\end{align*}
Thus, by dominated convergence, the left-hand side of \eqref{eq:SimonClaimGoal} becomes
\begin{align*}
\lim_{t \to \infty} & \frac{1}{t^2} \sum_{s,\widetilde{s}=0}^{t-1}
\left\langle P(s)\phi , P(\widetilde s) \psi \right\rangle  %\\
%& 
= \sum_{j = 1}^\infty \lim_{t \to \infty} \frac{1}{t^2} \sum_{s,\widetilde{s}=0}^{t-1} w^{-s}z_j^{s-\widetilde s} z^{\widetilde s} \left\langle \varphi_j, P  \psi \right\rangle \left\langle P \phi , \varphi_j \right\rangle.
\end{align*}
When $z \neq w$, the quantity on the right hand side goes to zero by a direct computation.
In the case $w = z$, only the terms with $z_j = z = w$ survive, so the quantity in question reduces to
\begin{align*}
%& \sum_{j = 1}^\infty \lim_{t \to \infty} \frac{1}{t^2} \sum_{s,\widetilde{s}=0}^{t-1} w^{-s}z_j^{s-\widetilde s} z^{\widetilde s} \left\langle \varphi_j, P  \psi \right\rangle \left\langle P \phi , \varphi_j \right\rangle %\\
\lim_{t \to \infty} \frac{1}{t^2} \sum_{s,\widetilde{s}=0}^{t-1}
\left\langle P(s)\phi , P(\widetilde s) \psi \right\rangle 
%& \qquad 
= \sum_{j : z_j = z} \langle \varphi_j, P\psi \rangle \langle P\phi,\varphi_j\rangle.
\end{align*}
The proof will be complete once we show that 
\(
\langle \varphi_j, P\psi \rangle
 = 0
=  \langle P \phi , \varphi_j \rangle 
\)
for all $j$ such that $z_j = z=w$.
Since $\psi$ and $\varphi_j$ belong to $D(X)$, a direct computation produces
\begin{align*}
    \langle \varphi_j, P \psi \rangle
%     & = \langle \varphi_j, (XU - U X)\psi \rangle \\
%     & = \langle \varphi_j, X U \psi \rangle  - \langle X U^{-1}\varphi_j, \psi \rangle \\
      = \langle \varphi_j, X z \psi \rangle  - \langle X z^{-1}\varphi_j, \psi \rangle 
%     & = \langle \varphi_j, X z \psi \rangle  - \langle \varphi_j, Xz\psi \rangle \\ 
      = 0,
\end{align*}
and a similar computation shows that $\langle P\phi, \varphi_j \rangle = 0$.
\end{proof}

\begin{proof}[Proof of Theorem~\ref{t.ppnoballistic}]
Let $U$ be an operator satisfying the assumptions of the theorem, and let $\{\varphi_1,\ldots \}$ denote a complete orthonormal set of eigenvectors of $U$ with corresponding eigenvalues $\{z_1, \ldots \}$.

From \eqref{e.simonppnoballistic1b} we obtain for $\psi \in D(X)$:
\begin{align*}
\| X U^t \psi \|^2 
%& = \langle X U^t \psi , X U^t \psi \rangle \\
%& = \langle \psi , U^{-t} X^2 U^t \psi \rangle \\
%& = \langle \psi , U^{-t} X U^{t} U^{-t} X U^{t} \psi \rangle \\
%& = \left\langle \psi , \left( X+  U^{-1} \sum_{s=0}^{t-1} P(s) \right) \; \left(  X+U^{-1} \sum_{\widetilde{s}=0}^{t-1} P(\widetilde{s})\right) \psi \right\rangle.
& = \| U^{-t}X U^t \psi \|^2  =
%\|X(t)\psi\|^2  =
\left\|\left( X+  U^{-1} \sum_{s=0}^{t-1} P(s) \right)\psi \right\|^2
=\sum_{s,\widetilde{s}=0}^{t-1}  \left\langle P(s)\psi , P(\widetilde{s}) \psi \right\rangle + O(t),
\end{align*}
where in the last step we utilized that $P$ is bounded.
%Since $P$ is bounded, using \eqref{eq:Ps_conj} the final expression can be written as
%\begin{equation}
   %\sum_{s,\widetilde{s}=0}^{t-1}  \left\langle \psi ,   U^{-1}  P(s)   U^{-1} P(\widetilde{s}) \psi \right\rangle + O(t)=
%   \sum_{s,\widetilde{s}=0}^{t-1}  \left\langle P(s)\psi , P(\widetilde{s}) \psi \right\rangle + O(t).
%\end{equation}
Consequently, 
\begin{equation}\label{e.simonppnoballistic2}
%\lim_{t \to \infty} \frac{1}{t^2} \| X  U^t \psi \|^2  = \lim_{t \to \infty} \frac{1}{t^2} \sum_{s,\widetilde{s}=0}^{t-1} \langle \psi , U^{-1} P(s) U^{-1} P(\widetilde s)  \psi \rangle.
\lim_{t \to \infty} \frac{1}{t^2} \| X  U^t \psi \|^2  = \lim_{t \to \infty} \frac{1}{t^2} \sum_{s,\widetilde{s}=0}^{t-1}  \left\langle P(s)\psi , P(\widetilde{s}) \psi \right\rangle.
\end{equation}

Now, let us assume that $\psi \in D(X)$ is a linear combination of (finitely many) members of $\{\varphi_j\}$, say,
$
\psi = \sum_{j = 1}^N c_j \varphi_j.
$
We have
\begin{align*}
\frac{1}{t^2} \langle P(s)\psi ,  P(\widetilde s) \psi \rangle
& = \frac{1}{t^2}  \left\langle \sum_{k = 1}^N c_k P(s)\varphi_k , P(\widetilde s) \sum_{j = 1}^N c_{j} \varphi_{j} \right\rangle \\
& = \frac{1}{t^2} \sum_{k = 1}^N \sum_{j = 1}^N \overline{c_k} c_{j} \left\langle P(s)\varphi_k , P(\widetilde s) \varphi_{j} \right\rangle,
\end{align*}
which converges to zero by the Lemma \ref{lem:eigConv}.
Since the vectors $\{\varphi_j\}$ comprise an orthonormal basis, the proof is finished.
\end{proof}

\section{A Pathological Example} \label{sec:pathological}

In this section, we prove Theorem \ref{t:quasiballistic} by constructing an explicit family of extended CMV matrices that have pure point spectrum with exponentially decaying eigenfunctions while displaying almost-ballistic transport in the sense of \eqref{eq:almostballistic}. These  matrices build on the unitary almost-Mathieu operator (UAMO) introduced in \cite{FilOngZha2017CMP, CFO2023CMP}: we consider $\calE_{\lambda_1, \lambda_2, \Phi,\theta}^\UAMO$ given by the Verblunsky coefficients
\begin{equation}
\begin{split}
&\alpha_{2n-1}^\UAMO  = \lambda_2 \sin 2\pi(n\Phi + \theta) , \hspace{18mm}
\alpha_{2n}^\UAMO = \lambda_1',
\end{split}
\end{equation}
where $\lambda' := \sqrt{1-\lambda^2}$.
This fits into the context of ergodic CMV matrices with the base dynamics $(\theta,j) \mapsto (\theta + \Phi/2,j+1)$ on the compact abelian group $\bbT \times \bbZ_2$ and sampling function
\begin{equation}
    f(\theta,j)
    = \begin{cases}
        \lambda_2 \sin 2\pi \theta & j =0 \\
        \lambda_1' & j=1.
    \end{cases}
\end{equation}

In the supercritical regime, $\lambda_1<\lambda_2$, it is known that $\calE_{\lambda_1,\lambda_2,\Phi,\theta}^\UAMO$ exhibits Anderson localization for a full measure set of $\Phi$'s \cite{CFO2023CMP, Yang2024Nonlin}, that is, for a.e.\ $\Phi,\theta$, $\calE_{\lambda_1,\lambda_2,\Phi,\theta}^\UAMO$ has a complete set of exponentially decaying eigenvectors.

Fix such $\lambda_1<\lambda_2$ and consider for $\beta_0,\beta_1\in \bbD$, the variations $\calE_{\Phi,\theta,\beta_0,\beta_1}$ of $\calE_{\lambda_1,\lambda_2,\Phi,\theta}^\UAMO$ given by
\begin{equation}\label{eq:Verblunskys_UAMO_mod}
\alpha_n = \begin{cases}
    \beta_n & n=0,1 \\
    \alpha_n^\UAMO & \text{otherwise.}
\end{cases}
\end{equation}

\begin{theorem} \label{t:pathologicalExample}
    For any increasing function $f:(0,\infty) \to (0, \infty)$ with $f(t) \to \infty$ as $t \to \infty$ and any $\lambda_1,\lambda_2\in[0,1]$ with $\lambda_1<\lambda_2$ there exist $\Phi$, $\theta$, $\beta_0$, and $\beta_1$ such that  $\calE = \calE_{\Phi,\theta,\beta_0,\beta_1}$ exhibits the coexistence of Anderson localization and almost-ballistic motion of the form
    \begin{equation} \label{eq:quasiBall}
        \limsup_{t\to\infty} \frac{f(t)}{t^2} 
        \|X\calE^t \delta_0 \|^2
        = \infty.
    \end{equation}
\end{theorem}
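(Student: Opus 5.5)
The strategy follows del Rio--Jitomirskaya--Last--Simon: a rank-two perturbation that decouples the half-lines, combined with a Liouville-type choice of frequency in the spirit of Last. The first step is to choose $\beta_0$ with $|\beta_0| = 1$. Formally this lies on the boundary of $\bbD$; alternatively we take $|\beta_0|$ close to $1$ and run an approximation argument. With $\rho_0 = 0$, the operator $\calE_{\Phi,\theta,\beta_0,\beta_1}$ decouples into a direct sum of two half-line CMV matrices.

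For a decoupled $\calE$ and a rational approximant $p_k/q_k$ of $\Phi$, consider the periodic operator $\calE_k$ obtained by replacing $\Phi$ with $p_k/q_k$ while keeping the same $\beta$'s. For the half-line periodic CMV matrix, the spectrum of $\delta_0$ (or $\delta_1$) has an absolutely continuous component on the bands. By the Guarneri/Last-type argument for banded operators (the time-averaged lower bound $\frac1T\sum_{t<T}\|X\calE_k^t\delta_0\|^2 \gtrsim T^2$ coming from the ac part of a periodic operator), one has ballistic motion $\|X\calE_k^t\delta_0\|^2 \ge c_k t^2$ for $t \ge T_k$ with constants $c_k>0$. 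Banded finite propagation then gives
\[
\|\calE^t\delta_0 - \calE_k^t\delta_0\| \le C t \sup_{|n|\le Ct} |\alpha_n - \alpha_n^{(k)}| \le C' t^2 |\Phi - p_k/q_k|.
\]
Together with the cutoff $\|X\calE^t\delta_0\|\le Rt$, this transfers $\|X\calE^t\delta_0\|^2 \ge \tfrac{c_k}{2} t^2$ to times $t\in[T_k,2T_k]$, provided $|\Phi-p_k/q_k|$ is super-exponentially small relative to $T_k$, $c_k$, and $f$. Choosing $q_{k+1}$ inductively so that $f(T_k)c_k \ge k$ yields \eqref{eq:quasiBall}. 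This gives the set of Liouville $\Phi$ as a dense $G_\delta$.

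The difficulty is that Liouville $\Phi$ may destroy Anderson localization of the UAMO, and the known localization result holds only for a.e.\ $\Phi$. The remedy, following \cite{dRJLS1996JAM}, is to fix a Diophantine $\Phi$ and a $\theta$ for which the UAMO localizes, and to let the decoupling parameter vary instead. We use the family $\beta_0 = e^{\iop\eta}$ with $\beta_1$ fixed; varying $\eta$ is a rank-one type perturbation at the cut. Via Aleksandrov/Simon--Wolff for CMV matrices, together with the positive Lyapunov exponent in the supercritical regime and Kotani-type arguments, we get pure point spectrum with exponentially decaying eigenfunctions for a.e.\ $\eta$ (and a.e.\ $\beta_0$ in an appropriate disc family). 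The set of $\beta$'s with almost-ballistic motion \eqref{eq:quasiBall} should be a dense $G_\delta$, via a Baire argument. The property ``$\limsup f(t)t^{-2}\|X\calE^t\delta_0\|^2 \ge k$ for some $t\ge k$'' is open in $\beta$, by continuity at fixed $t$. For density, we show that for a dense set of $\beta$'s the operator has some absolutely continuous or continuous spectrum for $\delta_0$, giving unbounded growth via the Guarneri bound. For instance, choose $\beta$ producing a singular continuous component, as in the dRJLS dichotomy: for dense $G_\delta$ boundary conditions the spectrum is purely singular continuous. This gives at least $t^{\epsilon}$ growth, but not $t^2/f(t)$, which is the gap.

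The main obstacle is therefore the density step: producing, for a dense set of parameters, motion arbitrarily close to ballistic. I would try first to combine both ideas. Make $\Phi$ Liouville but still within the full-measure localization set; the sets \cite{Yang2024Nonlin} for $\lambda_1<\lambda_2$ may include Liouville frequencies with $\beta(\Phi)$ below the Lyapunov exponent, which would allow arbitrarily good approximations $|\Phi-p_k/q_k|\approx e^{-\beta q_k}$. Then choose $\beta_0,\beta_1$ via Simon--Wolff to keep localization, and deduce the ballistic transfer from the periodic approximants at scale $T_k\sim e^{\beta q_k/3}$. If this quantitative route fails, I would fall back on the Baire-category argument with continuous-spectrum densities, strengthened using the periodic approximants.
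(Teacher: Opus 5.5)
Your proposal does not close, and what is missing is a conceptual idea rather than an estimate. You treat Anderson localization as something that must come from the unperturbed UAMO. So you move $\Phi$ into a full-measure set (Diophantine, or with $\beta(\Phi)$ below the Lyapunov exponent) and then try to manufacture fast transport by varying $\beta_0,\beta_1$. Both resulting routes fail.

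The Baire route is not merely missing its density step. A dense $G_\delta$ set of parameters and a full-measure set need not intersect. Moreover, the generic spectral type you use for density (singular continuous) is exactly the one excluded on the full-measure set where spectral averaging gives localization. So a category argument by itself cannot produce a localized $\beta$ with almost-ballistic motion.

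The ``Liouville inside the localization set'' route cannot handle an arbitrary $f$. With $\beta(\Phi)<\infty$, the window in which a periodic approximant controls $\calE^t\delta_0$ is $t\ll|\Phi-p_k/q_k|^{-1/2}\approx e^{\beta(\Phi) q_k/2}$, a fixed exponential in $q_k$. Meanwhile, the only available ballistic constants of the supercritical approximants degenerate exponentially in $q_k$ (compare the factor $C^{-q}$ in \eqref{eq:dz/dkLowerBound}), and $f$ may grow arbitrarily slowly. So $\Phi$ has to be tailored to $f$, exactly as in your first construction.

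The idea you are missing is that the dRJLS mechanism runs the other way: keep the super-Liouville $\Phi$ and obtain localization from the perturbation. Spectral averaging over $(\beta_0,\beta_1)\in\bbD^2$ needs only positivity of the Lyapunov exponent, which holds for \emph{every} irrational $\Phi$ when $\lambda_1<\lambda_2$. It gives pure point spectrum with exponentially decaying eigenfunctions for a.e.\ $\theta$ and a.e.\ $(\beta_0,\beta_1)$, even though for such $\Phi$ the unperturbed UAMO need not have any eigenvalues. This is how the paper concludes, via a whole-line version of \cite[Theorem~10.2.5]{Simon2005OPUC2} together with \cite[Theorem~2.9]{CFO2023CMP}. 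No decoupling is needed, and $|\beta_0|=1$ lies outside the allowed range $\beta_j\in\bbD$ anyway.

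To combine this with transport, the almost-ballistic bound must hold simultaneously for \emph{all} $\theta\in\bbT$ and $\beta_0,\beta_1\in\bbD$. The reason is that the localizing parameters are only known to form a full-measure set selected after $\Phi$. Your constants $c_k,T_k$ are attached to a single periodic operator: a Guarneri-type bound depends on the ac weight of $\delta_0$ and on the size of its spectral density. You give no control on them as $\theta,\beta$ vary.

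The paper supplies exactly this uniformity in Lemma~\ref{l.almostballisticppdecomposition}. For rational $\Phi$ it sets $\psi=\chi_{I_\theta}(\calE)\delta_0$, where $I_\theta$ is a band piece whose length is bounded below using \eqref{eq:dz/dkLowerBound}. Uniform Szeg\H{o} transfer-matrix bounds on $I_\theta$, together with Jitomirskaya--Last-type bounds on the Carath\'eodory function, give $\|\psi\|\ge C_1$ and $\vertiii{\psi}\le C_2$ with $C_1,C_2$ depending only on $\Phi$. Lemmas~\ref{lem:specFourier} and \ref{l.almostballisticbreakdown} convert this into a time-averaged lower bound on the mass outside $|n|<T/f(T)^{1/5}$. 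That bound is uniform in $\theta,\beta$ and stable under the (already uniform) Duhamel estimate of Lemma~\ref{eq:qp:quasiball:dynDuhaml}. The inductive choice of $\Phi$ in Lemma~\ref{l.almostballisticchoiceofalpha}, together with Lemma~\ref{l.almostballisticsufficientcondition}, then yields \eqref{eq:quasiBall} for every $\theta,\beta_0,\beta_1$.

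With that uniform statement in hand, your first (Liouville) construction plus spectral averaging in $\beta$ proves the theorem. Without it, the argument does not go through.
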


Throughout the discussion, let $f:(0,\infty) \to (0,\infty)$ denote an increasing function satisfying $f(t) \to \infty$ as $t \to \infty$. The dynamical statement from \eqref{eq:quasiBall} is sometimes called \emph{quasiballistic motion}; see \cite{JitZha2022JEMS} for results that can be obtained from quantitative repetition properties for self-adjoint operators.

\subsection{A Discriminant Estimate}

We will need a simple estimate on the discriminant of a suitable periodic problem, which naturally arises when the frequency $\Phi$ is rational.
Given $\alpha  \in \bbD$ and $z \in \bbC$, the associated \emph{Szeg\H{o} matrix} is given by
\begin{equation}
\Szego(\alpha,z) = \frac{1}{\rho} \begin{bmatrix} z & - \overline{\alpha} \\ -\alpha z & 1 \end{bmatrix}, 
\quad \rho := \sqrt{1-|\alpha|^2}.
\end{equation}
Using this, we define
\begin{equation}\label{eq:Szegö_prod}
\Szego_{[n,m]}(z) \equiv \Szego_{[n,m]}(z;\calE) 
:= \Szego(\alpha_n,z) \Szego(\alpha_{n-1},z) \cdots \Szego(\alpha_m,z)
\end{equation}
 If $(\alpha_n)_{n \in \bbZ}$ is periodic, that is, for some $p \in \bbN$, $\alpha_{n+p} \equiv \alpha_n$, we may study the spectral and dynamical characteristics of the associated CMV operator $\calE$ with the help of a Floquet decomposition, which we briefly recall here.
First, since we do not require minimality of the period, we freely assume that the period is $p=2q$ with $q \in \bbN$.
The \emph{monodromy} and \emph{discriminant} are given respectively by
\begin{equation}
    \monodromy(z) = z^{-q} \Szego_{[1,2q]}(z), \quad
    \discrim(z) = \trace \monodromy (z).
\end{equation}
The equation $\discrim(z) = 0$ has $2q$ distinct solutions, which we order according to argument and denote by $z_1(\pi/2),\ldots,z_{2q}(\pi/2)$. We then continuously extend $z_j(k)$ so that
\begin{equation} \label{eq:discrimininantFloquetEigs}
    \discrim z_j(k) = 2\cos k, \quad k \in [0,\pi], \ j = 1,2,\ldots,2q.
\end{equation}
Differentiating both sides of \eqref{eq:discrimininantFloquetEigs} and rearranging gives
\begin{equation} \label{eq:implicit}
    \frac{\rmd z_j}{\rmd k} 
    = - \frac{2\sin k}{\discrim' (z_j(k))}.
\end{equation}
Using the product rule for differentiation, we see that
\begin{equation}
    |\discrim'(z)| \leq C^q, \quad z \in \complexCircle,
\end{equation}
where $C = C(\alpha) > 0$ is a constant that depends only on $\max|\alpha_j|$. Putting this together with \eqref{eq:implicit}, one obtains the useful elementary lower bound
\begin{equation} \label{eq:dz/dkLowerBound}
    \left|\frac{\rmd z_j}{\rmd k} \right|
    \geq  2C^{-q} \sin k , \quad k \in [0,\pi].
\end{equation}

\subsection{Dynamical Estimates}

We begin by showing two lemmata that are independent of the model at hand: the first one provides an iterative criterion for almost-ballistic motion while the second one gives some useful bounds for general unitary operators.

\begin{lemma}\label{l.almostballisticsufficientcondition}
Let $U$ be a unitary operator on $\ell^2(\bbZ)$. If there exist $T_m \to \infty$ satisfying
\begin{equation}\label{e.almostballisticcriterion}
\frac{1}{T_m} \sum_{t=T_m}^{2T_m-1} \sum_{|n| \ge \frac{T_m}{f(T_m)^{1/5}}} |\langle \delta_n, U^t \delta_0 \rangle|^2
\ge \frac{1}{f(T_m)^{2/5}}, \qquad m \in \bbN,
\end{equation}
then \eqref{eq:quasiBall} holds.
\end{lemma}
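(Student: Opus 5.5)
The plan is to turn the time-averaged tail mass in \eqref{e.almostballisticcriterion} into a lower bound on $\|XU^t\delta_0\|^2$ at one time $t \in [T_m, 2T_m-1]$. Then we compare against $t^2/f(t)$, using that $f$ is increasing and $t \le 2T_m$.

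First, for every $t$ we have the pointwise bound
\[
\|XU^t\delta_0\|^2 = \sum_n n^2 |\langle \delta_n, U^t\delta_0\rangle|^2 \ge \frac{T_m^2}{f(T_m)^{2/5}} \sum_{|n|\ge T_m f(T_m)^{-1/5}} |\langle \delta_n, U^t\delta_0\rangle|^2 .
\]
Averaging over $t \in [T_m, 2T_m-1]$ and applying \eqref{e.almostballisticcriterion} gives
\[
\frac{1}{T_m}\sum_{t=T_m}^{2T_m-1} \|XU^t\delta_0\|^2 \ge \frac{T_m^2}{f(T_m)^{4/5}} .
\]
If $\delta_0 \notin D(X(t))$, the norm is $+\infty$ and the conclusion is trivial, so we may interpret everything in $[0,\infty]$. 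Since the maximum dominates the average, there is some $t_m \in [T_m, 2T_m-1]$ with $\|XU^{t_m}\delta_0\|^2 \ge T_m^2 f(T_m)^{-4/5}$.

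Next we evaluate the quantity in \eqref{eq:quasiBall} at $t_m$. Using $t_m \le 2T_m$ and the monotonicity $f(t_m) \ge f(T_m)$,
\[
\frac{f(t_m)}{t_m^2}\|XU^{t_m}\delta_0\|^2 \ge \frac{f(T_m)}{4T_m^2}\cdot \frac{T_m^2}{f(T_m)^{4/5}} = \frac{f(T_m)^{1/5}}{4}.
\]
Since $T_m \to \infty$, we have $t_m \ge T_m \to \infty$ and $f(T_m) \to \infty$, so the limsup is infinite.

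No step is a real obstacle; the argument is pigeonholing from the time average down to a single time. The only care needed is the direction of the monotonicity of $f$: we need $f(t_m) \ge f(T_m)$, which holds because $t_m \ge T_m$. The exponents $1/5$ and $2/5$ are chosen so that a positive power $f^{1/5}$ survives at the end.
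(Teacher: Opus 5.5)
Your proof is correct and is essentially the paper's argument. The only difference is the order of steps. The paper first pigeonholes a time $t_m \in [T_m, 2T_m-1]$ at which the tail sum itself is at least $f(T_m)^{-2/5}$, and then applies the bound $n^2 \ge T_m^2 f(T_m)^{-2/5}$ on the tail. You apply that bound first and pigeonhole on the time-averaged second moment. Both routes end with the same estimate $f(T_m)^{1/5}/4$, using $t_m < 2T_m$ and $f(t_m) \ge f(T_m)$.
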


\begin{proof}
For each $m \in \bbN$, \eqref{e.almostballisticcriterion} implies that for some integer $t_m \in [T_m, 2T_m-1]$, we have
$$
\sum_{|n| \ge \frac{T_m}{f(T_m)^{1/5}}} |\langle \delta_n, U^{t_m} \delta_0 \rangle|^2 \ge \frac{1}{f(T_m)^{2/5}}.
$$
But then
\begin{align*}
\limsup_{t\to\infty} \frac{f(t)}{t^2} 
        \|XU^t \delta_0 \|^2
        & =
\limsup_{t \to \infty} \frac{f(t)}{t^2}  \sum_{n\in \bbZ} |n|^2 |\langle \delta_n, U^t \delta_0 \rangle|^2  \\
&  \ge \limsup_{m \to \infty} \frac{f(t_m)}{t_m^2}  \sum_{n\in \bbZ} |n|^2 |\langle \delta_n, U^{t_m} \delta_0 \rangle|^2 \\[1mm]
&   \ge \limsup_{m \to \infty} \frac{f(t_m)}{(2T_m)^2} \frac{T_m^2}{f(T_m)^{2/5}} \sum_{|n| \ge \frac{T_m}{f(T_m)^{1/5}}} |\langle \delta_n, U^{ t_m} \delta_0 \rangle|^2 \\%[1mm]
%&   \ge \limsup_{m \to \infty} \frac{f(t_m)}{(2T_m)^2} \frac{T_m^2}{f(T_m)^{2/5}} \frac{1}{f(T_m)^{2/5}} \\[1mm]
&   \ge \limsup_{m \to \infty} \frac{f(T_m)^{1/5}}{4} \\[1mm]
&   = \infty,
\end{align*}
as claimed.
\end{proof}

\begin{lemma}
\label{l.almostballisticbreakdown}
Suppose  $\calH$ is a Hilbert space, $P:\calH \to \calH$ is an orthogonal projection, $U:\calH \to \calH$ is unitary,  and $\delta = \varphi + \psi$ is a unit vector with $\langle \varphi, \psi \rangle = 0$.
Then,
\begin{equation}
\label{eq:genericProjEst}
\frac{1}{T} \sum_{t=T}^{2T-1} \| (\idty - P) U^t \delta\|^2 
\ge \| \psi \|^2 - 3 \left( \frac{1}{T} \sum_{t=T}^{2T-1} \| P U^t \psi\|^2  \right)^{1/2}
\end{equation}
for every $T \in \bbN$.
\end{lemma}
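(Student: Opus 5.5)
We expand the squared norm and compare it with the contribution of $\psi$ alone. Since $\delta = \varphi + \psi$ and $U$ is unitary, for each $t$ we have $\|(\idty-P)U^t\delta\|^2 = 1 - \|PU^t\delta\|^2$. This reduces the task to bounding the time average of $\|PU^t\delta\|^2$ from above by $\|\varphi\|^2$ plus a small error. Orthogonality gives $1 = \|\varphi\|^2 + \|\psi\|^2$, so it suffices to show
\begin{equation*}
\frac1T\sum_{t=T}^{2T-1}\|PU^t\delta\|^2 \le \|\varphi\|^2 + 3\left(\frac1T\sum_{t=T}^{2T-1}\|PU^t\psi\|^2\right)^{1/2}.
\end{equation*}

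First, we write $PU^t\delta = PU^t\varphi + PU^t\psi$ and expand:
\begin{equation*}
\|PU^t\delta\|^2 = \|PU^t\varphi\|^2 + 2\Re\langle PU^t\varphi, PU^t\psi\rangle + \|PU^t\psi\|^2 .
\end{equation*}
We then bound each term.
\begin{itemize}
\item Since $P$ is a projection, $\|PU^t\varphi\|^2 \le \|\varphi\|^2$.
\item By Cauchy--Schwarz, the cross term is at most $2\|\varphi\|\,\|PU^t\psi\| \le 2\|PU^t\psi\|$.
\item For the last term, $\|PU^t\psi\|^2 \le \|PU^t\psi\|$, because $\|PU^t\psi\| \le \|\psi\| \le 1$.
\end{itemize}
Hence $\|PU^t\delta\|^2 \le \|\varphi\|^2 + 3\|PU^t\psi\|$.

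Next, we average over $t\in[T,2T-1]$. The averaged error is $\frac1T\sum_t\|PU^t\psi\|$. By Cauchy--Schwarz (equivalently, Jensen for the uniform average), this is at most $\bigl(\frac1T\sum_t\|PU^t\psi\|^2\bigr)^{1/2}$. Combining this with the identity from the first step gives \eqref{eq:genericProjEst}.

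There is no real obstacle. The only point needing care is the constant $3$: we absorb the quadratic term $\|PU^t\psi\|^2$ into the linear one using $\|\psi\|\le1$, and use $\|\varphi\|\le 1$ in the cross term. After that, the square-root bound follows from a single application of Cauchy--Schwarz to the time average.
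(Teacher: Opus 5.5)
Your proof is correct and follows essentially the same route as the paper: write $\|(\idty-P)U^t\delta\|^2 = 1-\|PU^t\delta\|^2$, expand $\|PU^t(\varphi+\psi)\|^2$, bound $\|PU^t\varphi\|^2\le\|\varphi\|^2$, and control the remaining quadratic and cross terms by $3$ times the root-mean-square of $\|PU^t\psi\|$, using Cauchy--Schwarz and $\|\varphi\|,\|\psi\|\le 1$. The only difference is cosmetic: you bound pointwise in $t$ and then apply Jensen, whereas the paper first averages into the form $\doubleBracket{x,y}_T$, applies Cauchy--Schwarz to that form, and squares to obtain the factor $9$.
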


\begin{proof}
Since $U$ is unitary, $P$ is an orthogonal projection, $\delta = \varphi + \psi$ is a unit vector, and $\langle \varphi, \psi \rangle  = 0$, the left-hand side of \eqref{eq:genericProjEst} can be rewritten as
\begin{align}
\nonumber
\frac{1}{T} 
\sum_{t=T}^{2T-1} \| (\idty - P) U^t \delta\|^2   
& = \frac{1}{T} \sum_{t=T}^{2T-1} 
\left( \|U^t(\varphi+\psi)\|^2 - \|P U^t (\varphi+\psi)\|^2 \right)   \\[1mm]
\nonumber
& =  \| \varphi \|^2 + \| \psi \|^2 -\frac{1}{T} \sum_{t=T}^{2T-1} 
\left( \|P U^t \varphi\|^2 
+ \|P U^t \psi\|^2 
+ 2 \real \langle P U^t \varphi , P U^t \psi  \rangle \right)   \\[1mm]
\nonumber
& \geq \|\varphi\|^2 
+ \| \psi \|^2  
- \frac{1}{T} \sum_{t=T}^{2T-1} 
\left( \|\varphi\|^2 +  \|P U^t \psi\|^2 
+ 2 \real \langle P U^t \varphi , P U^t \psi  \rangle \right)  \\
\label{eq:doublbracketEst1}
& = \|\psi\|^2 - \doubleBracket{\psi,\psi}_T - 2\real \doubleBracket{\varphi, \psi}_T,
\end{align}
where 
\begin{equation}
\label{e.almostballisticSLF}
\doubleBracket{x,y}_T := \frac{1}{T} \sum_{t=T}^{2T-1} \langle P U^t x, P U^t y \rangle  ,
\quad x,y \in \calH, \ T \in \bbN.
\end{equation}
Since $P$ is an orthogonal projection and $U$ is unitary, we note that one directly has $|\doubleBracket{x,y}_T| \leq \|x\|\|y\|$ for arbitrary $x,y$. 
Furthermore, by the Cauchy--Schwarz inequality, we get
\begin{align}\nonumber
    |\doubleBracket{x,y}_T|
    & \leq
    \frac{1}{T} \sum_{t=T}^{2T-1} \| P U^t x \| \| P U^t y \| \\
    \nonumber
    & \leq \left[ \frac{1}{T} \sum_{t=T}^{2T-1} \| P U^t x \|^2 \right]^{1/2} \left[  \frac{1}{T} \sum_{t=T}^{2T-1} \| P U^t y \|^2 \right]^{1/2} \\
    \label{eq:Qxybound}
    & =
    \doubleBracket{x,x}_T^{1/2} \doubleBracket{y,y}_T^{1/2}.
\end{align}

Using   \eqref{eq:Qxybound}  to estimate \eqref{eq:doublbracketEst1}, we note that
\begin{align*}
&\left( \doubleBracket{\psi,\psi}_T + 2 \Re \doubleBracket{\varphi,\psi}_T \right)^2 \\
& \qquad = 
\doubleBracket{\psi,\psi}_T^2 
+ 4 \doubleBracket{\psi,\psi}_T \cdot 
\Re \doubleBracket{\varphi,\psi}_T 
+ 4 \left[ \Re \doubleBracket{\varphi,\psi}_T \right]^2 \\
& \qquad \le \doubleBracket{\psi,\psi}_T + 4 \doubleBracket{\psi,\psi}_T + 4 \doubleBracket{\psi,\psi}_T \\
& \qquad = 9 \doubleBracket{\psi,\psi}_T,
\end{align*}
which in turn gives
\begin{equation}\label{e.dRJLS.LB2.2}
\| \psi \|^2 - \doubleBracket{\psi,\psi}_T  - 2 \Re \doubleBracket{\varphi,\psi}_T 
\ge \| \psi \|^2 - 3\doubleBracket{\psi,\psi}_T^{1/2},
\end{equation}
concluding the argument.
\end{proof}

The formulation of the next lemma requires some additional notation. Given a unitary operator $U$ and vectors $\varphi,\psi \in \calH$, the mixed spectral measure $\mu_{U,\varphi,\psi}$ is given by
\begin{equation}
\int f(z) \, \rmd\mu_{U,\varphi,\psi}(z)
:=
 \langle \varphi, f(U)\psi \rangle
\end{equation}
for bounded Borel measurable $f$.
When $\varphi = \psi$, we abbreviate $\mu_{U,\psi} = \mu_{U, \psi, \psi}$.
If the spectral measure $\mu_{U,\psi}$ is absolutely continuous (with respect to Lebesgue measure on $\complexCircle$, we write $F_{U,\psi}$ for its density, that is,
\begin{equation}
    \langle \psi, f(U)\psi \rangle
    = \int_{\complexCircle} f(z) F_{U,\psi}(z) \, \frac{\rmd z}{2\pi \iop z}.
\end{equation}
In this case, we define $\vertiii{\psi}_U := \|F_{U, \psi}\|_\infty^{1/2}$.
Otherwise, we put $\vertiii{\psi}_U = \infty$.
For a finite Borel measure $\mu$ on $\complexCircle$, we denote by $\widehat\mu$ its Fourier transform, given by
\begin{equation}
    \widehat{\mu}(t)
    = \int z^{-t} \, \rmd \mu(z),
    \quad t\in \bbZ.
\end{equation}
Likewise, if $F:\complexCircle \to \bbC$ is Lebesgue-integrable, we write
\begin{equation}
    \widehat F(t) = \int_{\complexCircle} z^{-t} F(z) \, \frac{\rmd z}{2\pi \iop z}..
\end{equation}

\begin{lemma} \label{lem:specFourier}
    Suppose $\calH$ is a Hilbert space, $U:\calH \to \calH$ is unitary, and $\varphi,\psi \in \calH$.
    If $\mu_\psi$ is absolutely continuous, then
    \begin{equation} \label{eq:planchGoal}
        \sum_{t \in \bbZ} |\langle \varphi, U^t \psi \rangle |^2
        \leq  \vertiii{\psi}^2 \|\varphi\|^2
    \end{equation}
\end{lemma}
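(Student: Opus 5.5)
The plan is to recognize $\langle \varphi, U^t\psi\rangle$ as a Fourier coefficient of a function on $\complexCircle$ and then apply Parseval/Bessel. If $\vertiii{\psi}=\infty$ there is nothing to prove, so assume $F:=F_{U,\psi}\in L^\infty(\complexCircle)$.

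The first step is to show that the mixed measure $\mu_{U,\varphi,\psi}$ is absolutely continuous with a density $G\in L^2$ satisfying
\[
\int_{\complexCircle}|G|^2\,\frac{\rmd z}{2\pi\iop z}\le \|F\|_\infty\|\varphi\|^2 .
\]
To get this, decompose $\varphi=\varphi_1+\varphi_2$, where $\varphi_1$ lies in the cyclic subspace $\calH_\psi=\overline{\spn}\{U^t\psi: t\in\bbZ\}$ and $\varphi_2\perp\calH_\psi$. Then $\langle\varphi,U^t\psi\rangle=\langle\varphi_1,U^t\psi\rangle$, and $\|\varphi_1\|\le\|\varphi\|$.

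By the spectral theorem, $\calH_\psi$ is unitarily equivalent to $L^2(\mu_{U,\psi})$ with $\psi\mapsto 1$ and $U$ acting as multiplication by $z$. Under this equivalence $\varphi_1\mapsto g\in L^2(F\,\frac{\rmd z}{2\pi\iop z})$, and
\[
\langle\varphi_1,U^t\psi\rangle=\int \overline{g(z)}\,z^t F(z)\,\frac{\rmd z}{2\pi\iop z}.
\]
So we may take $G=\overline{g}F$. Then
\[
\int|G|^2\,\frac{\rmd z}{2\pi \iop z}\le \|F\|_\infty\int|g|^2F\,\frac{\rmd z}{2\pi \iop z}=\|F\|_\infty\|\varphi_1\|^2 .
\]
In particular $G\in L^2\subset L^1$, since $\rmd z/(2\pi\iop z)$ is normalized arc length.

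The second step is Parseval. We have $\langle\varphi,U^t\psi\rangle=\widehat G(-t)$, and $\{z^t\}_{t \in \bbZ}$ is an orthonormal basis of $L^2(\complexCircle,\frac{\rmd z}{2\pi\iop z})$. Hence
\[
\sum_{t\in\bbZ}|\langle\varphi,U^t\psi\rangle|^2=\|G\|_2^2\le \vertiii{\psi}^2\|\varphi\|^2 ,
\]
which is \eqref{eq:planchGoal}.

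The only step needing care is the cyclic-subspace reduction, specifically that the image $g$ of $\varphi_1$ satisfies $\|g\|_{L^2(\mu_{U,\psi})}=\|\varphi_1\|$; everything else is routine. An alternative that avoids the explicit representation is to bound $|\mu_{U,\varphi,\psi}(B)|^2\le\mu_{U,\varphi}(B)\,\mu_{U,\psi}(B)$ and take the Radon–Nikodym derivative with respect to $\mu_{U,\psi}$. The cyclic-subspace route is cleaner, so I would use that.
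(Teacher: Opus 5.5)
Your proof is correct. It shares the paper's skeleton: the mixed measure $\mu_{U,\varphi,\psi}$ has a density, Parseval converts the left side of \eqref{eq:planchGoal} into the squared $L^2$ norm of that density, and this norm is bounded by $\|F_{U,\psi}\|_\infty\|\varphi\|^2$. Where you differ is in the decomposition used to control the density pointwise.

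The paper projects $\varphi$ onto the absolutely continuous subspace, $\varphi_{\ac}=\calP_\ac\varphi$. It then asserts $|F_{\varphi,\psi}|\le F_\psi^{1/2}F_{\varphi_{\ac}}^{1/2}$ a.e.\ ``by Cauchy--Schwarz''. That step tacitly passes from the setwise inequality $|\mu_{U,\varphi,\psi}(B)|^2\le\mu_{U,\varphi_{\ac}}(B)\,\mu_{U,\psi}(B)$ to an a.e.\ inequality between densities, which needs a Radon--Nikodym or differentiation-of-measures argument.

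You instead project onto the smaller cyclic subspace $\calH_\psi$ and work in its model $L^2(\mu_{U,\psi})$. There the density is explicit, $G=\overline{g}F$, and the paper's pointwise inequality becomes the identity $|G|^2=F\cdot(|g|^2F)$, where $|g|^2F$ is the density of $\mu_{U,\varphi_1}$. The only input is that the spectral model is unitary, so $\int|g|^2F=\|\varphi_1\|^2$. You also establish $G\in L^2$ before invoking Parseval, whereas the paper applies Parseval first.

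Your route is more self-contained at the cost of invoking the cyclic representation. The paper's is shorter to write but leaves the set-to-pointwise step implicit. The alternative you mention at the end, the setwise Cauchy--Schwarz bound followed by Radon--Nikodym, is essentially the paper's argument.
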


\begin{proof}
By the Cauchy--Schwarz inequality, we know that $\mu_{U,\varphi,\psi}$ is absolutely continuous with respect to $\mu_{U,\psi}$, hence with respect to Lebesgue measure, so its density $F_{\varphi,\psi}$ with respect to Lebesgue measure is well-defined.
By Parseval's formula, the left-hand side of \eqref{eq:planchGoal} is equivalent to
\begin{equation} \label{eq:dynamicFT}
    \sum_t \left| \widehat{\mu}_{U,\varphi,\psi}(t) \right|^2
    = \| \widehat\mu_{U,\varphi,\psi}\|^2_{\ell^2(\bbZ)}
    = \|F_{\varphi,\psi}\|^2_{L^2(\complexCircle)}.
\end{equation}

Let $\calP_\ac$ denote the orthogonal projection to the absolutely continuous subspace of $U$.
Writing $\varphi_{\rm ac} = \mathcal{P}_{\rm ac}\varphi$, we observe that
\begin{equation}
    |F_{\varphi,\psi}| \leq F_\psi^{1/2}F_{\varphi_\ac}^{1/2}
\end{equation}
a.e.\ by the Cauchy--Schwarz inequality. Combining this with \eqref{eq:dynamicFT} gives
\begin{equation}
    \sum_t \left| \widehat{\mu}_{U,\varphi,\psi}(t) \right|^2
    \leq \int_\complexCircle |F_\psi||F_{\varphi_{\ac}}| 
    \leq \vertiii{\psi}^2\|\varphi_\ac\|^2
    \leq \vertiii{\psi}^2\|\varphi\|^2,
\end{equation}
as promised.
\end{proof}

This concludes the generalities. Now we fix $\lambda_1<\lambda_2$ and consider for $\beta_0,\beta_1$, the variations $\calE_{\Phi,\theta,\beta_0,\beta_1}$ of the unitary almost-Mathieu operator $\calE_{\lambda_1,\lambda_2,\Phi,\theta}^\UAMO$ given in \eqref{eq:Verblunskys_UAMO_mod}.

\begin{lemma} \label{eq:qp:quasiball:dynDuhaml}
For all $\Phi_1,\Phi_2 \in \bbT$ and $t \in \bbR$, we have
$$
\sup_{\theta \in \bbT, \, \beta_j \in \bbD} \left\| \calE_{\Phi_1,\theta,\beta_0,\beta_1}^t \delta_0 - \calE_{\Phi_2,\theta,\beta_0,\beta_1}^t \delta_0 \right\| 
\le 2 \pi\lambda_2|\Phi_1  - \Phi_2| t^2.
$$
\end{lemma}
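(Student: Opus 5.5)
The plan is to use a telescoping (discrete Duhamel) identity together with a bound on the operator-norm difference of the two CMV matrices. Write $\calE_j = \calE_{\Phi_j,\theta,\beta_0,\beta_1}$ for $j=1,2$; here $t$ is understood to be a nonnegative integer. Then
\begin{equation*}
\calE_1^t - \calE_2^t = \sum_{s=0}^{t-1} \calE_1^{t-1-s}(\calE_1-\calE_2)\calE_2^{s}.
\end{equation*}
Since both operators are unitary, this gives $\|\calE_1^t\delta_0 - \calE_2^t\delta_0\| \le t\,\|\calE_1-\calE_2\|$. It therefore suffices to prove
\begin{equation*}
\|\calE_1 - \calE_2\| \le 2\pi\lambda_2|\Phi_1-\Phi_2|\, t
\end{equation*}
uniformly in $\theta,\beta_j$. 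As stated, however, this cannot hold, because the difference of the full operators is not small: the coefficient $\alpha_{2n-1}$ differs by about $2\pi\lambda_2 |n||\Phi_1-\Phi_2|$, which grows in $n$.

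The fix is to exploit locality. The matrix $\calE$ is banded with range $2$, so $\calE^s\delta_0$ is supported in $[-2s,2s]$. I will therefore replace $\calE_1-\calE_2$ in the telescoping sum by $(\calE_1-\calE_2)\chi_{[-2s,2s]}$ acting on $\calE_2^s\delta_0$. The $s$-th term is then bounded by the norm of the difference restricted to the sites $|n|\lesssim 2s+2$.

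The next step is the local norm bound. The matrices $\calE=\calL\calM$ are built from $2\times 2$ blocks $\Theta(\alpha)$, and the even coefficients (including $\beta_0$) agree between $\calE_1$ and $\calE_2$. Hence $\calL_1=\calL_2$, and
\[
\calE_1-\calE_2 = \calL(\calM_1-\calM_2),
\]
with $\calM_1-\calM_2$ block diagonal. Its norm on a region is the maximum over the relevant blocks of $\|\Theta(\alpha^{(1)})-\Theta(\alpha^{(2)})\|$. The perturbed coefficient $\beta_1$ is common to both operators, so that block gives zero. For the other blocks, $|\alpha^{(1)}_{2n-1}-\alpha^{(2)}_{2n-1}| \le 2\pi\lambda_2|n||\Phi_1-\Phi_2|$ by the mean value theorem applied to $\sin$.

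The $\rho$ entries are the main obstacle, since $\alpha\mapsto\sqrt{1-|\alpha|^2}$ is not Lipschitz near $|\alpha|=1$. One might hope to use $|\alpha|\le\lambda_2$, but $\lambda_2$ may equal $1$, so that does not help. Instead I would try the following route. The matrix $\Theta$ is unitary and smooth as a function of the phase variable $x$ in $\alpha=\lambda_2\sin x$. Writing $\lambda_2\sin x=\sin y$, one gets $\rho=\cos y$. Then
\[
\Theta=\begin{bmatrix}\sin y&\cos y\\ \cos y&-\sin y\end{bmatrix}
\]
is a rotation-reflection with $\|\partial_y\Theta\|=1$, and $|dy/dx|\le 1$ whenever $\lambda_2\le 1$: indeed $\cos y\,dy=\lambda_2\cos x\,dx$ and $\cos^2 y = 1-\lambda_2^2\sin^2 x\ge \lambda_2^2\cos^2 x$. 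Hence each block difference is at most $2\pi|n||\Phi_1-\Phi_2|$.

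To recover the factor $\lambda_2$ exactly, one needs slightly more care. One option is to use the bound on the off-diagonal entries more cleverly. Alternatively, I would accept the constant $2\pi$ and note that $\lambda_2$ enters the diagonal entries, where I would check whether $|\partial_x\rho|\le\lambda_2$ holds: we have $|\partial_x \rho| = \lambda_2^2|\sin x\cos x|/\rho$, and using $\rho\ge\lambda_2|\cos x|$ this is at most $\lambda_2|\sin x|\le\lambda_2$. So each block difference is at most $2\pi\lambda_2|n||\Phi_1-\Phi_2|$.

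Finally, I sum over $s$. For sites $|2n-1|\le 2s+2$ we have $|n|\lesssim s+1$, so the $s$-th term is at most $2\pi\lambda_2|\Phi_1-\Phi_2|(s+1)$. Summing $s=0,\dots,t-1$ gives
\[
2\pi\lambda_2|\Phi_1-\Phi_2|\,\tfrac{t(t+1)}{2}\le 2\pi\lambda_2|\Phi_1-\Phi_2|\,t^2,
\]
which is the claimed estimate.

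The index bookkeeping must be checked. This includes where the $\calM$ blocks sit relative to the support $[-2s,2s]$, and whether $\calM$ or $\calL$ acts first; if needed, I would use the support of $\calM\calE^s\delta_0$ instead. This bookkeeping is the delicate point for obtaining the clean constant. Any slack can be absorbed by the gap between $t(t+1)/2$ and $t^2$.
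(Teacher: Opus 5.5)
Your proof is correct and follows essentially the paper's argument: telescoping, $\calL_1=\calL_2$, blocks of $\calM_1-\calM_2$ of norm at most $2\pi\lambda_2|n||\Phi_1-\Phi_2|$, and linear spreading of $\calE_2^s\delta_0$; the one change is that you control the spreading through the support $[-2s,2s]$ (which forces $|n|\le s$ and even yields $\pi\lambda_2|\Phi_1-\Phi_2|t(t-1)$) rather than through $\|X\calE_2^s\delta_0\|\le 2s$ obtained from $\|P\|\le 2$. One detail needs tightening: the difference of two $\Theta$-blocks with real $\alpha$ has norm $\sqrt{a^2+r^2}$ (with $a$, $r$ the differences of the $\alpha$- and $\rho$-entries), so entrywise bounds by $\lambda_2$ only give $\sqrt{2}\,\lambda_2$, and you should instead use $(\partial_x\alpha)^2+(\partial_x\rho)^2\le\lambda_2^2(\cos^2x+\sin^2x)=\lambda_2^2$, or in your first route note $\cos^2y=1-\lambda_2^2\sin^2x\ge\cos^2x$, so that $|dy/dx|\le\lambda_2$.
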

Note that the right hand side is trivially bounded from above by $2$, so the statement of the lemma is only nontrivial for times $t\lesssim |\Phi_1  - \Phi_2|^{-1/2}$.
\begin{proof}[Proof of Lemma~\ref{eq:qp:quasiball:dynDuhaml}]
Consider $\calE_j =\calE_{\Phi_j,\theta,\beta_0,\beta_1}$.
%One has the following simple telescoping sum:
%\begin{align}\label{e.duhamleapp1}
%\calE_1^t - \calE_2^t  
%= \sum_{s=0}^{t-1} \calE_2^{t-s-1}\calE_1^{s+1} - \calE_2^{t-s}\calE_1^{s}
%= \sum_{s=0}^{t-1} \calE_2^{t-s-1}(\calE_1 - \calE_2 ) \calE_1^{s} .
%\end{align}
Writing $\calE_j = \calL_j \calM_j$ for the $\calL\calM$-decomposition of $\calE_j$ as in \eqref{eq:LMdecomp}, we note that $\calL_1 = \calL_2$ (and both are unitary), so we obtain the following elementary bound:
\begin{align*}
\| (\calE_1 - \calE_2) \psi\|
=  \| (\calM_1 - \calM_2) \psi\|
\le 2\pi\lambda_2 | \Phi_1 - \Phi_2| \| X \psi \|.    
\end{align*}

Combining this estimate with a telescoping sum argument, we get
\begin{equation}\label{e.duhamleapp2}
\left\| \calE_1^t \delta_0 - \calE_2^t \delta_0 \right\| 
\le 2\pi\lambda_2 |\Phi_1 - \Phi_2| 
\sum_{s=0}^{t-1} \left\| X \calE_2^s \delta_0 \right\|,
\end{equation}
uniformly in $\theta,\beta_0, \beta_1$.

Using $X \delta_0 = 0$ and $\| P(s) \| \le 2$, we can further estimate the right-hand side of \eqref{e.duhamleapp2} further and obtain
\begin{equation}\label{e.duhamleapp3}
\left\| \calE_1^{t} \delta_0 - \calE_2^{t} \delta_0 \right\| 
\le 2\pi\lambda_2  |\Phi_1 - \Phi_2| \sum_{s=0}^{t-1}2 s 
=2 \pi\lambda_2  |\Phi_1 - \Phi_2| t(t-1),
\end{equation}
from which the claim follows.
\end{proof}

\begin{lemma}\label{l.almostballisticppdecomposition}
Given $\Phi \in \bbQ$, there are constants $C_1 = C_1(\Phi)$, $C_2 = C_2(\Phi) \in (0,\infty)$ such that for any $\theta \in \bbT$ and any $\beta_0,\beta_1 \in \bbD$, there exists a decomposition
\begin{equation}
\delta_0 = \varphi_{\theta, \beta_0, \beta_1} + \psi_{\theta, \beta_0, \beta_1}
\end{equation}
where $\varphi = \varphi_{\theta, \beta_0, \beta_1}$ and $\psi = \psi_{\theta, \beta_0, \beta_1}$ have the following properties:
\begin{align}
\label{eq:qp:quasiball:phipsiconstr1}
\langle \varphi,\psi \rangle & = 0 \\
\label{eq:qp:quasiball:phipsiconstr2}
\|\psi\| & \geq C_1 \\
\label{eq:qp:quasiball:phipsiconstr3}
\vertiii{\psi}_{\calE_{\Phi,\theta,\beta_0,\beta_1}} & \leq C_2.
\end{align}
\end{lemma}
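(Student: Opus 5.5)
The plan is to exploit that for rational $\Phi = p/q$ the coefficients $\alpha_n^\UAMO$ are $2q$-periodic. Hence $\calE = \calE_{\Phi,\theta,\beta_0,\beta_1}$ is a perturbation of the periodic ECMV matrix $\calE^\UAMO_{\lambda_1,\lambda_2,\Phi,\theta}$ supported on finitely many sites around $\{0,1,2\}$ (changing $\alpha_0,\alpha_1$ only alters finitely many entries of $\calL,\calM$). The periodic operator has purely absolutely continuous spectrum, a union of $2q$ bands described by the Floquet data $z_j(k)$ of the discriminant. The finite-rank perturbation preserves the a.c. spectrum, which is Birman--Krein/trace class stability. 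It also adds at most finitely many eigenvalues, with a number bounded in terms of $q$ only.

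I would take the spectral splitting
\[
\psi := \chi_{\{F \le K\}}(\calE)\,\calP_\ac \delta_0, \qquad \varphi := \delta_0 - \psi ,
\]
where $F$ is the a.c. density of $\mu_{\calE,\delta_0}$ and $K = K(\Phi)$ is to be chosen. Orthogonality \eqref{eq:qp:quasiball:phipsiconstr1} is automatic because $\psi$ and $\varphi$ are spectral projections of $\delta_0$ onto disjoint sets. The bound \eqref{eq:qp:quasiball:phipsiconstr3} is automatic with $C_2 = K^{1/2}$, since $F_{\calE,\psi} = F\chi_{\{F\le K\}}$. So everything reduces to \eqref{eq:qp:quasiball:phipsiconstr2}: we need a lower bound on $\int_{\{F\le K\}} F$ that is uniform in $\theta\in\bbT$ and $\beta_0,\beta_1\in\bbD$.

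For this I would compute $F$ explicitly on the bands via generalized eigenfunctions. On each band, for $z = z_j(k)$, there are two Bloch solutions $u_\pm$ of the periodic problem on the half-lines $n\le -1$ and $n\ge 2$. These half-line problems do not see $\beta_0,\beta_1$, and they depend continuously on $\theta$ through $\Szego_{[1,2q]}$. Matching across sites $0,1$ uses the Szeg\H{o} matrices $\Szego(\beta_0,z)$ and $\Szego(\beta_1,z)$. The standard formula then expresses $F(z)$ as $\sum_\pm |u_\pm(0)|^2$ divided by a Wronskian-type quantity. That quantity is bounded above and below in terms of $\sin k$ and $|\rmd z_j/\rmd k|$, and \eqref{eq:dz/dkLowerBound} controls the latter by $C^{-q}\sin k$. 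I would restrict to a fixed compact subset of each band, say $k\in[\pi/4,3\pi/4]$. There I expect to get $c(\Phi)\le F\le K(\Phi)$, which immediately gives $\|\psi\|^2 \ge c(\Phi)\cdot|\text{band portion}| =: C_1^2$.

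The main obstacle is uniformity in $\beta_0,\beta_1$, because $\bbD$ is not compact. As $|\beta_j|\to 1$ the Szeg\H{o} matrices blow up like $\rho^{-1}$, and the operator tends to decouple at the modified site. Uniformity in $\theta$ should follow from compactness of $\bbT$ and continuity. For $\beta$, I would first try to normalize: $\delta_0$ itself lies in the span of the modified block. The relevant quantity is then a Schur/Geronimus-type Möbius transform of the half-line Carathéodory functions, with $\beta_0,\beta_1$ as parameters. Since $\delta_0$ is coupled to both half-lines, I would aim to show that the real part of this transform is bounded below on band interiors uniformly in $\beta$. If this fails near $|\beta_j|=1$, the fallback is to prove \eqref{eq:qp:quasiball:phipsiconstr2} and \eqref{eq:qp:quasiball:phipsiconstr3} for $\beta$ in compact subdisks, allowing the constants to depend on that subdisk. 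This weaker version is harmless for the application to Theorem~\ref{t:pathologicalExample}, where $\beta_0,\beta_1$ are ours to choose.
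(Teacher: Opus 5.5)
Your splitting $\psi=\chi_{\{F\le K\}}(\calE)\calP_{\ac}\delta_0$ is a legitimate alternative to the paper's $\psi=\chi_{I_\theta}(\calE)\delta_0$, where $I_\theta=z_{1,\theta}([\pi/3,2\pi/3])$ is a fixed window in the first band. It makes \eqref{eq:qp:quasiball:phipsiconstr1} and \eqref{eq:qp:quasiball:phipsiconstr3} automatic and removes any singular part for free. The paper instead needs two-sided density bounds on $I_\theta$: an upper bound from Li--Damanik--Zhou and a lower bound from Jitomirskaya--Last, both fed by uniformly bounded Szeg\H{o} products.

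Your choice buys more than you use. Since $F$ has total mass at most $1$, you have $|\{F>K\}|\le 1/K$. So if $F\ge c$ on a set of measure $\ge\ell$, you can take $K=2/\ell$ and get $\|\psi\|^2\ge c\ell/2$; no upper bound on $F$ is needed. Correspondingly, \eqref{eq:dz/dkLowerBound} points the wrong way for you. A lower bound on $|\rmd z_j/\rmd k|$ is an upper bound on the density of states, so it controls $F$ from above. The lower bound on $F$ must come from elsewhere: nonvanishing of $\discrim'$ and of the Bloch data at the origin on band interiors plus compactness, or a Jitomirskaya--Last-type inequality.

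The genuine gap is the one you flag yourself: the proposal never proves a lower bound on $F$ that is uniform in $\beta_0,\beta_1\in\bbD$. The M\"obius-transform estimate is only announced. The fallback proves a weaker lemma, with $C_1,C_2$ depending on a polydisk $\{|\beta_j|\le r\}$. For the statement as written, you would have to control the boundary values of the Carath\'eodory function of $\mu_{\calE,\delta_0}$ on band interiors uniformly through the decoupling limits $|\beta_j|\to 1$. In those limits $\delta_0$ still sees the periodic left half-line through the unperturbed $\alpha_{-1}$, which is what one would have to exploit.

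Be aware, though, that the paper's proof does not do this either. Its bound \eqref{eq:qp:quasiball:unifr2bds} cannot be uniform over $\beta_j\in\bbD$: already $\Szego_{[1,1]}(z)=\Szego(\beta_1,z)$ has norm at least $(1-|\beta_1|^2)^{-1/2}$ (apply it to $(1,0)^\top$). Longer products $\Szego(\alpha_n,z)\cdots\Szego(\alpha_2,z)\Szego(\beta_1,z)$ inherit this blow-up, because the periodic prefactor has unimodular determinant and is bounded on $I_\theta$. So the paper's argument really yields exactly your fallback. You are also right that this suffices downstream: fix $r<1$, run Lemma~\ref{l.almostballisticchoiceofalpha} over $|\beta_j|\le r$, and in the proof of Theorem~\ref{t:pathologicalExample} choose the good $(\beta_0,\beta_1)$ supplied by spectral averaging inside that polydisk.
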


\begin{proof}
Let $\Phi \in \bbQ$ be given. First,   consider the unperturbed operator $
\calE_{\Phi,\theta} := \calE_{\Phi,\theta,\alpha_0, \alpha_1}$. 
Writing $\Phi = p/q$ in lowest terms, we see that $\calE_{\Phi,\theta}$ is periodic of period $2q$. For $
\theta \in \bbT$ and $k \in [0,2\pi]$, denote by $z_{1,\theta}(k), \ldots, z_{2q,\theta}(k)$ the Floquet eigenvalues sorted by their argument, and define
\begin{equation}
I_{\theta} = z_{1,\theta}( [\pi/3, 2\pi/3]).
\end{equation} 
Combining this with the estimate \eqref{eq:dz/dkLowerBound}, we obtain 
\begin{equation} \label{eq:qp:quasiball:Iomegalengthbd}
\ell(\Phi) := \inf_{\theta \in \bbT} |I_\theta| > 0.
\end{equation}
 We claim that the desired $\psi$ and $\varphi$ can be produced via
\begin{equation}
\psi_{\theta,\beta_0,\beta_1} = \chi_{I_\theta}(\calE_{\Phi,\theta,\beta_0,\beta_1}) \delta_0, \quad
\varphi_{\theta,\beta_0,\beta_1} = \delta_0 - \psi_{\theta,\beta_0, \beta_1}.
\end{equation}
Since the spectral projection is an orthogonal projection, \eqref{eq:qp:quasiball:phipsiconstr1} is automatically satisfied.

For $n<m$ denote by $\Szego_{[n,m]}(z,\calE_{\Phi, \theta, \beta_0, \beta_1})$ the Szeg\H{o} matrix of $\calE_{\Phi, \theta, \beta_0, \beta_1}$ defined in \eqref{eq:Szegö_prod}.
Note first that applying   \cite[Lemma 2.2]{fillmanSpectralApproximationErgodic2017} gives that $\Szego_{[1,2mq]}(z, \calE_{\Phi, \theta, \beta_0, \beta_1})$ is uniformly bounded for all $\theta \in \bbT$, $z \in I_\theta$, and $m \in \bbZ$. 
Interpolating, one sees that $\Szego_{[1,n]}(z, \calE_{\Phi, \theta, \beta_0, \beta_1})$ is uniformly bounded for $\theta \in \bbT$, $z \in I_\theta$, and $n \in \bbZ$. 
The discussion above implies the uniform bound
\begin{equation} \label{eq:qp:quasiball:unifr2bds}
\sup \{ \|\Szego_{[1,n]}(z, \calE_{\Phi, \theta, \beta_0, \beta_1})\| : \theta \in \bbT, \ z \in I_\theta, \ \beta_j \in \bbD, \ n \in \bbZ\} < \infty.
\end{equation}
This suffices to establish upper and lower bounds on the boundary values of the Carath\'eodory function, which in turn yields the desired estimates \cite[Section~5]{LiDamZho2022TAMS}.
More specifically, the desired upper bound is formulated as \cite[Lemma~5.2]{LiDamZho2022TAMS} but the needed lower bound follows from the same argument by utilizing the corresponding lower bound from the Jitomirskaya--Last inequality.
\end{proof}

We come now to the final lemma, which gives an inductive scheme to construct a suitable $\Phi$ exhibiting the desired almost-ballistic motion uniformly in $\theta$, $\beta_0$, and $\beta_1$.

\begin{lemma}\label{l.almostballisticchoiceofalpha}
There exists an irrational $\Phi \in \bbT$ such that for every $\theta \in \bbT$ and $\beta_0, \beta_1 \in \bbD$, $\calE = \calE_{\Phi, \theta, \beta_0, \beta_1}$ obeys \eqref{e.almostballisticcriterion}.
\end{lemma}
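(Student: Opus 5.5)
We construct $\Phi$ as the limit of rationals $\Phi_m = p_m/q_m$ chosen inductively, together with times $T_m \to \infty$, in the spirit of the construction of del Rio--Jitomirskaya--Last--Simon.

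\emph{Step 1: the criterion at a fixed rational.} Fix $\Phi_m\in\bbQ$ and let $C_1(\Phi_m)$, $C_2(\Phi_m)$ be the constants from Lemma~\ref{l.almostballisticppdecomposition}. For each $\theta$, $\beta_0,\beta_1$ decompose $\delta_0=\varphi+\psi$ as there. Let $P=P_{T}$ be the projection onto $\ell^2(\{|n|<T/f(T)^{1/5}\})$. By Lemma~\ref{lem:specFourier},
\[
\sum_{t\in\bbZ}\|P_T U^t\psi\|^2=\sum_{|n|<T/f(T)^{1/5}}\sum_t|\langle\delta_n,U^t\psi\rangle|^2\le \Bigl(\frac{2T}{f(T)^{1/5}}+1\Bigr)C_2^2 .
\]
Hence
\[
\frac1T\sum_{t=T}^{2T-1}\|P_TU^t\psi\|^2\lesssim C_2^2 f(T)^{-1/5}.
\]
Lemma~\ref{l.almostballisticbreakdown} then gives, uniformly in $\theta,\beta_j$,
\[
\frac1T\sum_{t=T}^{2T-1}\|(\idty-P_T)U^t\delta_0\|^2\ \ge\ C_1^2-3\sqrt{3}\,C_2 f(T)^{-1/10}.
\]
Since $f(T)\to\infty$, we can choose $T_m$ so large (and larger than $T_{m-1}$, say $T_m\ge m$) that the right-hand side exceeds $C_1^2/2+ f(T_m)^{-2/5}$ for all $\theta,\beta_0,\beta_1$. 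This is a strict version of \eqref{e.almostballisticcriterion} for $\calE_{\Phi_m,\theta,\beta_0,\beta_1}$, with margin $C_1^2/2$.

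\emph{Step 2: stability under perturbing $\Phi$.} By Lemma~\ref{eq:qp:quasiball:dynDuhaml}, for any $\Phi'$ with $|\Phi'-\Phi_m|\le\epsilon_m$ and all $t\le 2T_m$,
\[
\|\calE_{\Phi',\dots}^t\delta_0-\calE_{\Phi_m,\dots}^t\delta_0\|\le 8\pi\lambda_2\epsilon_mT_m^2 .
\]
Because $\bigl|\|(\idty-P)a\|^2-\|(\idty-P)b\|^2\bigr|\le 2\|a-b\|$ for unit vectors $a,b$, choosing
\[
\epsilon_m \le \frac{C_1(\Phi_m)^2}{100\,\lambda_2 T_m^2}
\]
guarantees that every $\Phi'$ in the closed interval $J_m=[\Phi_m-\epsilon_m,\Phi_m+\epsilon_m]$ still satisfies \eqref{e.almostballisticcriterion} at $T_m$, uniformly in $\theta,\beta_0,\beta_1$.

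\emph{Step 3: induction and the limit.} Having $J_m$, choose a rational $\Phi_{m+1}$ in the interior of $J_m$ with $\Phi_{m+1}\ne\Phi_j$ for $j\le m$. Then choose $T_{m+1}>T_m$ and $\epsilon_{m+1}$ by Steps 1--2, shrinking $\epsilon_{m+1}$ further so that $J_{m+1}\subset J_m$ and $\epsilon_{m+1}<\epsilon_m/2$. The nested compact intervals intersect in a single point $\Phi$, which lies in every $J_m$. Hence $\calE_{\Phi,\theta,\beta_0,\beta_1}$ satisfies \eqref{e.almostballisticcriterion} for all $m$, uniformly in $\theta,\beta_0,\beta_1$.

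To ensure $\Phi$ is irrational, we additionally require $\epsilon_{m+1}<|\Phi_{m+1}-r|$ for each of the first $m$ rationals $r$ in a fixed enumeration of $\bbQ$ (excluding $\Phi_{m+1}$ itself if it appears). Then $\Phi$ avoids every rational. The same freedom also lets us force $\Phi$ into the full-measure localization set, e.g.\ by demanding fast Liouville-type approximation, as in the later use of the lemma; but that is not needed here.

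\emph{Main obstacle.} The delicate point is Step 1: the uniformity in $\theta$ and $\beta_j$ of both $C_1$ and $C_2$, which is exactly what Lemma~\ref{l.almostballisticppdecomposition} provides, and the fact that $T_m$ may be chosen after $\Phi_m$. Everything else is bookkeeping of the order of choices $\Phi_m\to T_m\to\epsilon_m\to\Phi_{m+1}$.
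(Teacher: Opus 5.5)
Your proposal is correct and is essentially the paper's argument: you get the criterion with a margin at a rational $\Phi_m$ via Lemmas~\ref{l.almostballisticppdecomposition}, \ref{lem:specFourier} and \ref{l.almostballisticbreakdown}, keep it on a small neighborhood of $\Phi_m$ via Lemma~\ref{eq:qp:quasiball:dynDuhaml}, and nest inductively; the only difference is that you secure irrationality by excluding an enumeration of $\bbQ$ from nested closed intervals, where the paper uses Liouville-type increments $\Phi_{m+1}-\Phi_m=2^{-k_m!}$. Two small corrections: with $\epsilon_m = C_1^2/(100\lambda_2T_m^2)$ your perturbation error $2\cdot 8\pi\lambda_2\epsilon_mT_m^2=\tfrac{16\pi}{100}C_1^2$ slightly exceeds the margin $C_1^2/2$, so take e.g.\ $200$ instead of $100$; and your closing aside is off, because Liouville-type frequencies form a null set, and the paper obtains localization for its (Liouville) $\Phi$ by spectral averaging in $\beta_0,\beta_1$, not by placing $\Phi$ in a full-measure localization set.
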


\begin{proof}

We inductively pick $\Phi_m \in \bbQ$, $T_m$, $\Delta_m$ with the following properties:
\begin{itemize}

\item[(i)] $\Phi_{m+1} - \Phi_m = 2^{-k_m!}$ for some $k_m \in \bbN$ satisfying $k_m<k_{m+1}$ for all $m$.

\item[(ii)] For every $\theta \in \bbT$, $\beta_0,\beta_1 \in \bbD$, and $\Phi$ such that $|\Phi - \Phi_m| < \Delta_m$, we have
\begin{equation}\label{e.almostballisticcriterionspecific}
\frac{1}{T_m} \sum_{t=T_m}^{2T_m-1} \sum_{|n| \ge \frac{T_m}{f(T_m)^{1/5}}} |\langle \delta_n, \calE^t_{\Phi, \theta, \beta_0, \beta_1} \delta_0 \rangle|^2 
\ge \frac{1}{f(T_m)^{2/5}}.
\end{equation}

\item[(iii)] $|\Phi_{m+1} - \Phi_k| < \Delta_k$ for every $k = 1,\ldots,m$.

\end{itemize}

We start with $\Phi_1 = 0$ and explain how to pick $T_m$, $\Delta_m$, $\Phi_{m+1}$ given $T_1, \ldots, T_{m-1}$, $\Delta_1, \ldots, \Delta_{m-1}$, $\Phi_1, \ldots, \Phi_{m}$.

Note that the choice of $\Phi_1$ and property (i) ensure both that each $\Phi_m$ is rational and that the sequence $\{ \Phi_m \}$ converges to an irrational limit $\Phi  = \lim \Phi_m$. Indeed, the assumptions above imply that $\Phi_{m+1} = p_m/2^{k_m!}$ with $p_m \in \bbN$ and
\begin{equation} \label{eq:qpQuasiBall:alphamBd}
|\Phi - \Phi_{m+1}| 
= \sum_{j=m+1}^\infty 2^{-k_j!}
\leq 2^{-k_{m+1}!+1}.
\end{equation}

If $\Phi$ were rational, say $\Phi = r/s$, then one would have 
\[
|\Phi - \Phi_{m+1}| = 
\left| \frac{r}{s} - \frac{p_m}{2^{k_m!}} \right| \geq \frac{1}{s \, 2^{k_m!}} \]
for any $m$. Thus, since $2^{k_m!-k_{m+1}!+1} \to 0$, this would produce a contradiction with \eqref{eq:qpQuasiBall:alphamBd}. It follows that $\Phi$ is irrational.

Given $\Phi_m$, apply Lemma~\ref{l.almostballisticppdecomposition} and let $C_1 = C_1(\Phi_m)$ and $C_2 = C_2(\Phi_m)$, using the notation from that lemma. 
Choose $T_m \ge 2 T_{m-1}$ (along with the requirement $T_1 \ge 2$, which then gives $T_m \ge 2^m$) so that
$$
C_1^2 - 3 \sqrt{\frac{2 C_2^2}{f(T_m)^{1/5}}} \ge \frac{2}{f(T_m)^{2/5}}. 
$$
This is possible since $C_1, C_2$ are fixed at this stage and $f(T) \to \infty$ as $T \to \infty$. 
 
Given $\theta \in \bbT$ and $\beta_0, \beta_1\in \bbD$, consider the decomposition
$$
\delta_0 = \varphi_{\theta,\beta_0, \beta_1} + \psi_{\theta,\beta_0, \beta_1}
$$
provided by Lemma~\ref{l.almostballisticppdecomposition}.

Denoting the orthogonal projection onto $\spn\{ \delta_n : |n| < \frac{T_m}{f(T_m)^{1/5}} \}$ by $P_m$, we first note using Lemma~\ref{lem:specFourier} that
\begin{align*}
\frac{1}{T_m} \sum_{t=T_m}^{2T_m-1} \| P_m \calE_{\Phi_m, \theta, \beta_0, \beta_1}^t \psi_{\theta, \beta_0, \beta_1}\|^2  &\le \frac{1}{T_m} \# \left\{ n : |n| < \frac{T_m}{f(T_m)^{1/5}} \right\}  \vertiii{\psi_{\omega,\lambda_0, \lambda_1}}^2 \\
& \le \frac{2 C_2^2}{f(T_m)^{1/5}} .
\end{align*}
Together with Lemma~\ref{l.almostballisticbreakdown}, this yields the lower bound
\begin{align*}
\frac{1}{T_m} 
& \sum_{t=T_m}^{2T_m-1} \sum_{|n| \ge \frac{T_m}{f(T_m)^{1/5}}} |\langle \delta_n, \calE_{\Phi_m, \theta,  \beta_0, \beta_1}^t \delta_0 \rangle|^2  \\
& = \frac{1}{T_m} \sum_{t=T_m}^{2T_m-1} \| (\idty - P_m) \calE_{\Phi_m, \theta,  \beta_0, \beta_1}^t \delta_0\|^2   \\
& \ge \| \psi \|^2 - 3 \left( \frac{1}{T_m} \sum_{t=T_m}^{2T_m-1} \| P_m \calE_{\Phi_m, \theta,  \beta_0, \beta_1}^t \psi_{\Phi_m, \theta, \beta_0, \beta_1}\|^2   \right)^{1/2} \\
& \ge C_1^2 - 3 \sqrt{\frac{2 C_2^2}{f(T_m)^{1/5}}}  \\
& \ge \frac{2}{f(T_m)^{2/5}}.
\end{align*}

By Lemma~\ref{eq:qp:quasiball:dynDuhaml} we can then pick $\Delta_m > 0$ such that for any $\Phi \in \bbT$ with $|\Phi - \Phi_m| \leq \Delta_m$, we have
$$
\frac{1}{T_m} \sum_{t=T_m}^{2T_m-1} \sum_{|n| \ge \frac{T_m}{f(T_m)^{1/5}}} |\langle \delta_n, \calE_{\Phi_m, \theta,  \beta_0, \beta_1}^t \delta_0 \rangle|^2   \ge \frac{1}{f(T_m)^{2/5}}.
$$

Finally, pick $k_m \ge k_{m-1}+1$ (with the convention $k_0=0$) large enough so that $|\Phi_{m+1} - \Phi_k| < \Delta_k$ for $k = 1, \ldots, m$.

This procedure ensures that (i)--(iii) hold for all $m$ and hence the limit $\Phi = \lim_{m \to \infty} \Phi_m$ is irrational and satisfies the following for every $\theta \in \bbT$, $\beta_0, \beta_1 \in \bbD$, and $m$:
\begin{equation}\label{e.almostballisticcriterionspecificconclude}
\frac{1}{T_m} \sum_{t=T_m}^{2T_m-1} \sum_{|n| \ge \frac{T_m}{f(T_m)^{1/5}}} |\langle \delta_n,\calE_{\Phi_m, \theta,  \beta_0, \beta_1}^t \delta_0 \rangle|^2  \ge \frac{1}{f(T_m)^{2/5}}.
\end{equation}
On account of Lemma~\ref{eq:qp:quasiball:dynDuhaml}, we deduce that  \eqref{e.almostballisticcriterion} holds for $\calE_{\Phi, \theta, \beta_0, \beta_1}$, concluding the proof.
\end{proof}

\begin{proof}[Proof of Theorem~\ref{t:pathologicalExample}]
Let a monotone $f : \bbR_+ \to \bbR_+$ such that $\lim_{t \to \infty} f(t) = \infty$ be given. 
The operator $\calE$ will be of the form \eqref{eq:Verblunskys_UAMO_mod}, where $\Phi \in \bbT$ irrational is selected via an application of Lemma~\ref{l.almostballisticchoiceofalpha} and $\beta_j \in \bbD$.

This ensures that the almost ballistic transport statement \eqref{eq:quasiBall} holds for the operator $\calE = \calE_{\Phi, \theta, \beta_0, \beta_1}$ of the form \eqref{eq:quasiBall} with this choice of $\Phi$, uniformly for all choices of $\theta \in \bbT$ and $\beta_0, \beta_1 \in \bbD$. On the other hand, due to a whole-line spectral averaging result in the spirit of \cite[Theorem~10.2.5]{Simon2005OPUC2}\footnote{Strictly speaking, \cite{Simon2005OPUC2} works for varying the leftmost Verblunsky coefficient in the half-line case; however, it is possible to generalize the result to the extended CMV setting. \textit{Inter alia}, this will be addressed in a forthcoming work of the authors \cite{CFVpreprint}.} and \cite[Theorem~2.9]{CFO2023CMP}, there are $\theta \in \bbT$ and $\beta_0, \beta_1 \in \bbD$ so that $\calE_{\Phi, \theta, \beta_0, \beta_1}$ has pure point spectrum with exponentially decaying eigenfunctions (indeed, almost every choice works), concluding the proof.
\end{proof}

\begin{proof}[Proof of Theorem~\ref{t:quasiballistic}]
    This is an immediate consequence of Theorem~\ref{t:pathologicalExample}.
\end{proof}

\bibliographystyle{abbrvArXiv}
\bibliography{refs}

\end{document}